\newtheorem{theorem}{Theorem}%[section]
\newtheorem{lemma}[theorem]{Lemma}
\newtheorem{proposition}[theorem]{Proposition}
\newtheorem{corollary}[theorem]{Corollary}
\newtheorem{remark}[theorem]{Remark}
\def\XXint#1#2#3{{\setbox0=\hbox{$#1{#2#3}{\int}$ }
\vcenter{\hbox{$#2#3$ }}\kern-.6\wd0}}
\newcommand{\di}{\operatorname{div}}
\newcommand{\dif}{\operatorname{d}\!}
\newcommand{\ymarrow}{\overset{\mathbf{Y}}{\rightarrow}}
\newcommand{\N}{\mathbb{N}}
\newcommand{\R}{\mathbb{R}}
\newcommand{\A}{\mathbb{B}}
\newcommand{\B}{\mathbb{B}}
\newcommand{\cala}{\mathcal{A}}
\newcommand{\locc}{\operatorname{loc}}
\newcommand{\ball}{\operatorname{B}}
\newcommand{\sobo}{\operatorname{W}}
\newcommand{\lebe}{\operatorname{L}}
\newcommand{\hold}{\operatorname{C}}
\newcommand{\curl}{\operatorname{curl}}
\renewcommand{\leq}{\leqslant}
\newcommand{\imag}{\operatorname{i}}
\newcommand{\id}{\operatorname{Id}}
\newcommand{\e}{\operatorname{e}}
\renewcommand{\di}{\operatorname{div}}
\newcommand{\lin}{\mathrm{Lin}}
\renewcommand{\e}{\operatorname{e}}
\newcommand{\rank}{\operatorname{rank\,}}
\begin{document}
\title[Potentials for $\mathcal{A}$--quasiconvexity]{Potentials for $\mathcal{A}$--quasiconvexity}
\author[B. Rai\c{t}\u{a}]{Bogdan Rai\c{t}\u{a}}\thanks{\emph{Author's Adress}: Zeeman Building, University of Warwick, Coventry CV4 7HP, United Kingdom; \emph{email}: \texttt{bogdan.raita@warwick.ac.uk}; \emph{phone}: +44 24 765 73420.}

\subjclass[2010]{Primary: 49J45; Secondary: 35G05}
\keywords{Constant rank differential operators, Compensated compactness, $\mathcal{A}$--quasiconvexity, Lower semi--continuity, Young measures.}
\begin{abstract}
We show that each constant rank operator $\mathcal{A}$ admits an exact potential $\A$ in frequency space. We use this fact to show that the notion of $\mathcal{A}$--quasiconvexity can be tested against compactly supported fields. We also show that $\mathcal{A}$--free Young measures are generated by sequences $\A u_j$, modulo shifts by the barycentre.
\end{abstract}
\maketitle
\section{Introduction}
A challenging question in the study of non--linear partial differential differential equations is to find which non--linear functionals are well--behaved with respect to weak convergence, which represents the typical topology consistent with physical measurements and has satisfactory compactness properties. In the context of the Calculus of Variations, answering this question amounts, roughly speaking, to describing semi--continuity properties of functionals 
\begin{align}\label{eq:functional}
\mathscr{E}[w]=\int_\Omega f(w(x))\dif x
\end{align}
with respect to weak convergence in certain weakly closed, convex subsets $\mathfrak{C}$, say, of $\lebe^p$--spaces, $1<p<\infty)$, under growth conditions
\begin{align}\label{eq:growth_pot}
0\leq f\leq c(|\cdot|^p+1)
\end{align}
on the integrands $f$. Such subsets $\mathfrak{C}$ can account for differential constraints and boundary conditions. Modulo terms removed for simplicity of exposition, such functionals could model, for instance, the energy arising from the deformation of a solid body $\Omega$, viewed as a sufficiently regular open subset of $\R^n$, where $f$ is a continuous energy density map characterized by the constitutive properties of the material. In accordance with the Direct Method in the Calculus of Variations, imposing a suitable bound from below on $f$ ensures existence and weak compactness of minimizing sequences $w_j$. The appropriate continuity property of $\mathscr	E$ in this case is that of lower semi--continuity with respect to weak convergence in $\lebe^p$
\begin{align*}
w_j\rightharpoonup w\implies \liminf_{j\rightarrow\infty}\mathscr{E}[w_j]\geq \mathscr{E}[w],
\end{align*}
which, if satisfied, implies existence of a minimizer $w\in\mathfrak{C}$.

It is well--known that if $\mathfrak{C}$ consists of the whole of $\lebe^p$, then $\mathscr{E}$ is weakly sequentially lower semi--continuous if and only if $f$ satisfying \eqref{eq:growth_pot} is convex. Of course, convexity of $f$ is sufficient for lower semi--continuity (always understood as weakly sequential throughout this note)  in any reasonable class $\mathfrak{C}$, but it is hardly necessary in general. For instance, if $\mathfrak{C}$ is the space of weak gradients in $\lebe^2$ and $f$ is a quadratic form, then one can easily show that $f$ being positive on rank--one matrices implies lower semi--continuity. This example, that we will later come back to in more generality, is of particular relevance, as it provides the insight for a second convexity condition, which is necessary for lower semi--continuity with the constraint $w=\nabla u$: if $\mathscr{E}$ is lower semi--continuous, then $f$ is convex along rank--one lines. In particular, for integrands $f$ of class $\hold^2$, this is equivalent to the so--called Legendre--Hadamard ellipticity condition
\begin{align*}
\frac{\partial^2F(X)}{\partial X_{ij}\partial X_{\alpha\beta}}a_ia_\alpha b_jb_\beta\geq 0\quad\text{for all } X,a,b,
\end{align*}
where summation over repeated indices is adopted. From this point of view, lower semi--continuity of $\mathscr{E}$ acting on gradients reflects a semi--convexity condition on $f$. Indeed, it was shown by \textsc{Morrey} in \cite{Morrey} 
that lower semi--continuity of $\mathscr	E$ is equivalent with \emph{quasiconvexity} of $f$, i.e., the Jensen--type inequality
\begin{align*}
f(\eta)\leq\fint_Q f(\eta+\nabla u(x))\dif x
\end{align*}
holds for all $\eta$ and all smooth maps $u$ with compact support in the open cube $Q$. On one hand, the quasiconvexity assumption is a plausible constitutive relation for energy functionals arising in solid mechanics \cite{Ball77_0}; on the other hand, it is but a minor improvement of the lower semi--continuity concept, which makes it particularly difficult to check in applications. The counterexample of \textsc{\v{S}ver\'ak} \cite{Sv} rules out the possibility of quasiconvexity being a type of directional convexity (see also \cite[Ex.~3.5]{BCO} for the case of higher order gradients). A tractable sufficient condition for quasiconvexity is \emph{polyconvexity}, i.e., $f$ is a convex functions of the minors, also introduced by \textsc{Morrey} in \cite{Morrey} in connection with lower semi--continuity and used by \textsc{Ball} to obtain existence theorems under very mild growth conditions, giving very satisfactory existence results in non--linear elasticity \cite{Ball76}. The fact that quasiconvexity does not imply polyconvexity is much easier to see, at least in higher dimensions, and follows from an old observation of \textsc{Terpstra} concerning quadratic forms \cite{Terpstra} (see also \cite{Ball85,AD} and the references therein).

The above considerations show that a considerable amount of work was devoted to the treatment of lower semi--continuity in the case when $\mathfrak{C}$ consists of gradients (see \cite{AF,Marcel} and the monographs \cite{Da,Pe}).
However, for instance in continuum mechanics, it is often the case that $\mathfrak{C}$ consists of those $\lebe^p$--fields $w$ that satisfy a linear, typically under--determined, partial differential constraint, say $\mathcal{A}w=0$, assumption that we make henceforth. Examples arise in elasticity, plasticity, elasto--plasticity, electromagnetism, and others. The $\mathcal{A}$--free framework originates in the pioneering work of \textsc{Murat} and \textsc{Tartar} in compensated compactness \cite{Murat0,Tartar1,Tartar2} and can be correlated with the question of finding energy functionals that are continuous with respect to weak convergence in $\mathfrak{C}$ \cite{Murat}. The latter question was also studied in generality by \textsc{Ball}, \textsc{Currie}, and \textsc{Olver} in \cite{BCO}, leading to the generalization of polyconvexity to the case where energy functionals depend on higher order derivatives. In this case, the definition of quasiconvexity extends mutatis mutandis \cite{Meyers}. As to the question of lower semi--continuity, the analysis of the case when $f$ is a quadratic form (see, e.g., \cite[Ch.~17]{Tartar3} or \cite[Thm.~2]{Tartar4}) reveals a different necessary condition of directional convexity, namely with respect to the so--called wave cone of $\mathcal{A}$. It was shown by \textsc{Dacorogna} in \cite[Thm.~I.2.3]{Da82} that, in order to have lower semi--continuity, it is sufficient to assume the following generalization of quasiconvexity, namely that
\begin{align*}
f(\eta)\leq\fint_Qf(\eta+w(x))\dif x
\end{align*}
for all $\eta$ and all bounded $w$ such that $\int_Qw=0$ and $\mathcal{A}w=0$. However, it is not clear whether this condition is necessary. More recently, \textsc{Fonseca} and \textsc{M\"uller} showed in \cite{FM99} that if one assumes in addition that the fields $w$ are periodic, in which case $f$ is called \emph{$\mathcal{A}$--quasiconvex}, then one indeed obtains a necessary and sufficient condition\footnote{For comparison, see also \textsc{Seregin}'s work \cite{Se99} in incompressible linearized elasticity, where the methods used to project on solenoidal fields do not require Fourier analysis.} (under suitable growth assumptions on $f$). Their result holds under the assumption that the symbol map $\mathcal{A}(\cdot)$ of $\mathcal{A}$ is a constant rank matrix--valued field away from 0. This condition, introduced in \cite[Def.~1.5]{SW} to prove coerciveness inequalities for non--elliptic systems, was first used in the context of compensated compactness by \textsc{Murat} and ensures, as noted on \cite[p.502]{Murat0}, the continuity of the map
\begin{align}\label{eq:proj_murat}
0\neq\xi\mapsto\text{Proj}_{\ker\mathcal{A}(\xi)},
\end{align}
making tools from pseudo--differential calculus available. In the absence of the constant rank assumption, little is known about the lower semi--continuity problem. One of the few results in this direction was proved by \textsc{M\"uller} in \cite{Muller}, answering a long standing question of \textsc{Tartar} (see also \cite{MuMu} for a generalization).

In the proof of the main result of \cite{FM99}, considerable difficulty is encountered when proving sufficiency of $\mathcal{A}$--quasiconvexity. One reason for this is the absence of potential functions for $\mathcal{A}$, which, if available, should allow one to test with compactly supported functions in the definition of $\mathcal{A}$--quasiconvexity and, perhaps, use more standard methods.

The main result of the present work is to show that the existence of such a potential in Fourier space is equivalent with the constant rank condition.
\begin{theorem}\label{thm:main_pot}
Let $\mathcal{A}$ be a linear, homogeneous differential operator with constant coefficients on $\R^n$. Then $\mathcal{A}$ has constant rank if and only if there exists a linear, homogeneous differential operator $\B$ with constant coefficients on $\R^n$ such that 
\begin{align}\label{eq:exact_pot}
\ker\mathcal{A}(\xi)=\mathrm{im\,}\B(\xi)
\end{align}
for all $\xi\in\R^n\setminus\{0\}$.
\end{theorem}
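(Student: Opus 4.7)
The plan is to prove the equivalence in two separate arguments, with the harder direction extracting a polynomial potential from the graded syzygy module of $\mathcal A$. For the reverse implication, if $\B$ exists with the stated property, then pointwise $\operatorname{rank}\mathcal A(\xi)+\operatorname{rank}\B(\xi)=V$ for every $\xi\neq 0$, where $V$ is the common dimension of the target of $\B$ and the domain of $\mathcal A$. Since $\{\operatorname{rank}M\ge r\}$ is an open condition (some $r\times r$ minor is non-zero), both rank functions are lower semi-continuous, and two lower semi-continuous functions with a constant sum must each be locally constant. On the connected set $\R^n\setminus\{0\}$ for $n\ge 2$ this forces the constant-rank property for $\mathcal A$; for $n=1$, homogeneity relates the two components.

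For the forward direction I would work over $R:=\R[\xi_1,\dots,\xi_n]$ and view $\mathcal A$ as a graded $R$-linear map $R^V\to R^W$, with kernel submodule $K:=\ker\mathcal A\subset R^V$. By the Hilbert basis theorem and gradedness, $K$ admits finitely many homogeneous generators $b_1,\dots,b_s\in K$ of degrees $d_1,\dots,d_s$. Let $r$ be the constant value of $\operatorname{rank}\mathcal A(\xi)$ on $\R^n\setminus\{0\}$, and let $U\subset\C^n$ be the Zariski-open subset where some $r\times r$ minor of $\mathcal A$ is non-zero; since the minors do not all vanish at any real $\xi_0\neq 0$, $U$ contains $\R^n\setminus\{0\}$ and $\operatorname{rank}\mathcal A_\C(\xi)=r$ identically on $U$. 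The standard fact that a morphism of locally free sheaves with pointwise-constant rank has locally free kernel, image and cokernel then yields that $\widetilde K|_U$ is locally free of rank $V-r$, and that the short exact sequence $0\to\widetilde K\to\mathcal O_U^V\to\operatorname{im}\mathcal A|_U\to 0$ is stalkwise split. Tensoring with the residue field at any $\xi_0\in U$ therefore produces an injection $\widetilde K(\xi_0)\hookrightarrow\C^V$ whose image is exactly $\ker\mathcal A(\xi_0)$. Because the $b_i$ generate $K$ as an $R$-module they generate every stalk, and Nakayama's lemma then forces their fiber values to span $\ker\mathcal A(\xi_0)$; passing to real spans for $\xi_0\in\R^n\setminus\{0\}$ gives $\operatorname{span}_\R(b_1(\xi_0),\dots,b_s(\xi_0))=\ker\mathcal A(\xi_0)$.

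To assemble these into a single homogeneous symbol, I would fix $N\geq\max_i d_i$ and replace each $b_i$ by the finite list $\{\xi^\alpha b_i:|\alpha|=N-d_i\}$; since at any $\xi_0\neq 0$ some monomial $\xi_0^\alpha$ is non-zero, the span of the enlarged list at $\xi_0$ still equals that of the originals. Collecting these as columns yields a matrix $\B(\xi)$ homogeneous of degree $N$ in $\xi$, hence the symbol of a constant-coefficient, linear, homogeneous operator $\B[D]$; the polynomial identity $\mathcal A\B\equiv 0$ produces the inclusion $\operatorname{im}\B(\xi)\subseteq\ker\mathcal A(\xi)$, and the preceding paragraph supplies equality. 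The main obstacle is precisely the step that converts $R$-module generators of $K$ into a pointwise spanning family for $\ker\mathcal A(\xi_0)$: this is where the constant-rank hypothesis enters essentially, via local freeness of the kernel sheaf and Nakayama's lemma. Without it, $\widetilde K$ can acquire \emph{Tor} contributions at the rank-drop locus, so that even a full set of homogeneous generators of $K$ need not evaluate to a basis of $\ker\mathcal A(\xi_0)$ there, and the uniform-degree trick cannot repair this defect.
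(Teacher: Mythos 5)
Your proposal is correct, but it takes a genuinely different route from the paper on both implications. For sufficiency the paper is entirely explicit: it invokes Decell's Cayley--Hamilton formula for the Moore--Penrose pseudo--inverse and sets $\B(\xi)=a_r(\xi)\mathbb{P}(\xi)=a_r(\xi)\left[\id_W-\mathcal{A}^\dagger(\xi)\mathcal{A}(\xi)\right]$, where $a_r$ is a coefficient of the characteristic polynomial of $\mathcal{A}(\xi)\mathcal{A}^*(\xi)$; constant rank enters only to make the index $r$ in that formula independent of $\xi$, and homogeneity follows because $a_r$ is a combination of minors of a fixed order. You instead extract the potential from the kernel (syzygy) module $K$ of the symbol over the polynomial ring: constant rank makes the kernel, image and cokernel sheaves locally free on the maximal--rank locus $U\supset\R^n\setminus\{0\}$, the locally split exact sequence identifies the fibre of $K$ at $\xi_0$ with $\ker\mathcal{A}(\xi_0)$, so homogeneous module generators evaluate to pointwise spanning sets, and the monomial--padding trick restores a single degree of homogeneity. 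Your construction is less explicit than the paper's formula (which the author reuses, e.g.\ in Remark~\ref{rk:ann}), but it is more conceptual, it isolates exactly where constant rank is needed (the splitting, i.e.\ the absence of Tor contributions at rank--drop points), and it typically produces a potential of lower order than the pseudo--inverse formula, which the paper itself concedes is often overcomplicated. For necessity the paper proves the stronger Lemma~\ref{lem:nec_CR_pot} (a constant rank--sum on a set of positive measure forces constant rank, via nullity of zero sets of nonzero polynomial minors), whereas you use lower semicontinuity of the rank plus connectedness of $\R^n\setminus\{0\}$, with homogeneity handling $n=1$; this is more elementary and entirely sufficient here. Two small points to spell out when writing this up: the identity $\operatorname{rank}\mathcal{A}_{\C}(\xi)=r$ on all of $U$ needs the one--line remark that every $(r+1)\times(r+1)$ minor vanishes on the open set $\R^n\setminus\{0\}$ and is therefore the zero polynomial; and Nakayama is not actually needed, since right--exactness of the tensor product already shows that generators of $K$ span the fibre---the substantive input is the injectivity of the fibre map into $\C^{\dim W}$ coming from the local splitting.
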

Here $\mathcal{A}(\cdot)$, $\B(\cdot)$ denote the (tensor--valued) symbol maps of, respectively, $\mathcal{A},\B$. We say that $\mathcal{A}$ has \emph{constant rank} if the map $0\neq\xi\mapsto\rank\mathcal{A}(\xi)$ is constant (see Section \ref{sec:algebra} for detailed notation). We will regard $\B$ as the \emph{potential} and $\mathcal{A}$ as the \emph{annihilator}, although this terminology is not standard.

It is important to mention that the algebraic relation \eqref{eq:exact_pot} does \emph{not}, in general, imply for vector fields $w$ that
\begin{align}\label{eq:exact_fn}
\mathcal{A}w=0\implies w=\B u\qquad\text{ for some }u.
\end{align}
To see this, simply take $\mathcal{A}=\nabla^k$. In turn, if we impose restrictions on $w$ that allow for usage of the Fourier transform, \eqref{eq:exact_fn} can be shown to hold (Lemma \ref{lem:equal_fields}). As a consequence, standard arguments in the Calculus of Variations lead to the fact that a map $f$ is $\mathcal{A}$--quasiconvex if and only if
\begin{align*}
f(\eta)\leq\fint_Qf(\eta+\B u(x))\dif x
\end{align*}
for all $\eta$ and all smooth vector fields $u$ supported in an open cube $Q$ (Corolla--ry~\ref{cor:A-Aqc}). It is also the case that under the constant rank condition, the notions of $\cala$--quasiconvexity \cite[Def.~3.1]{FM99} and \textsc{Dacorogna}'s $\cala$--$\B$--quasconvexity \cite[Eq.~(A.12)]{Da82} coincide. In particular, one can define $\cala$--quasiconvexity via integration over arbitrary domains (Lemma~\ref{lem:Aqc_dom}). As a consequence, the lower semi--continuity properties of functionals \eqref{eq:functional} in the topologies considered in \cite{FM99,ARDPR}, which are natural from the point of view of compensated compactness theory, rely only on the structure of $\B$.

In fact, we will show that the $\mathcal{A}$--quasiconvex relaxation of a continuous integrand can be described in terms of $\B$ only. From this point of view, it is natural to investigate the Young measures generated by sequences satisfying differential constraints \cite[Sec.~4]{FM99}, as they efficiently describe the minimization of energies that are not lower semi--continuous. We recall that the role of parametrized measures for non--convex problems in the Calculus of Variations was first recognized by \textsc{Young} in the pioneering works \cite{Y1,Y2,Y3}. See the monographs \cite{Mu,Pe} for a modern, detailed exposition.

Roughly speaking, for $1<p<\infty$, we consider a sequence $w_j$ converging weakly in $\lebe^p$ which is asymptotically $\mathcal{A}$--free and generates a Young measure $\bm{\nu}$. Technically speaking, it suffices to take $\mathcal{A}w_j$ to be strongly compact in $\sobo^{-k,p}_{\locc}$, where $k$ is the order of $\mathcal{A}$. This is (slightly more general than) the topology considered in \cite[Rk.~4.2(i)]{FM99} and is consistent with the topology considered in compensated compactness (see, e.g., \cite[Thm.~17.3]{Tartar3}, which essentially deals with the case of linear Euler--Lagrange equations). In this setting, we will show that the Young measure $\bm{\nu}$ is generated by a sequence of smooth maps $\B u_j$, modulo a shift by the barycentre.

To sum up, under the constant rank condition on the annihilator $\mathcal{A}$, the objects characterizing the lower semi--continuous relaxation of functionals defined on $\mathcal{A}$--free vector fields (i.e., $\mathcal{A}$--quasiconvex envelopes and $\mathcal{A}$--free Young measures) can be described only in terms of the potential $\B$ constructed in Theorem \ref{thm:main_pot}. From this point of view, it is the author's opinion that the study of functionals
\begin{align*}
\mathscr{E}[w]=\int_\Omega f(x,w(x))\dif x\text{ for }\mathcal{A}w=0\quad\text{ and }\quad\mathscr{F}[u]=\int_\Omega f(x,\B u(x))\dif x
\end{align*}
is essentially dual (\emph{strictly} under the constant rank condition). See also \cite{Da82_A-B_qc} and the Appendix of \cite{Da82}.

Since testing with the appropriate quantity is fundamental in the study of partial differential equations, we hope that the observations made in this work will increase the flexibility of analyzing functionals in either class described above. On the other hand, the functional $\mathscr	F$ seems better suited for incorporating boundary conditions. This will be pursued elsewhere.

This paper is organized as follows: In Section~\ref{sec:algebra} we prove the main Theorem~\ref{thm:main_pot}, in Section~\ref{sec:Aqc} we prove that $\mathcal{A}$--quasiconvexity can be tested with compactly supported fields $w=\B u$ (Corollary~\ref{cor:A-Aqc}), and in Section~\ref{sec:AYM} we prove that $\mathcal{A}$--free Young measures are shifts of Young measures generated by sequences $\B u_j$.
\subsection*{Acknowledgement}
The author is grateful to Jan Kristensen for introducing him to the problem and for offering
insightful comments and helpful suggestions. This work was supported by Engineering and Physical Sciences Research Council Award EP/L015811/1. This project has received funding from the European Research Council (ERC) under the European Union's Horizon 2020 research and innovation programme under grant agreement No 757254 (SINGULARITY).
\section{Proof of Theorem \ref{thm:main_pot}}\label{sec:algebra}
We take a moment to clarify notation. By a $k$--homogeneous, linear differential operator $\mathcal{A}$ on $\R^n$ from $W$ to $X$ we mean
\begin{align}\label{eq:cal_A}
\mathcal{A}w\coloneqq\sum_{|\alpha|=k}\partial^\alpha\mathcal{A}_\alpha w\qquad\text{for }w\colon\R^n\rightarrow W,
\end{align}
where $\mathcal{A}_\alpha\in\lin(W,X)$ for all multi--indices $\alpha$ such that $|\alpha|=k$, for finite dimensional inner product spaces $W,X$. We also define the (Fourier) symbol map
\begin{align*}
\mathcal{A}(\xi)\coloneqq\sum_{|\alpha|=k}\xi^\alpha\mathcal{A}_\alpha \in\lin(W,X)\qquad\text{for }\xi\in\R^n.
\end{align*}
We also recall the condition mentioned above that $\mathcal{A}$ is of \emph{constant rank} if there exists a natural number $r$ such that
\begin{align*}
\mathrm{rank}\mathcal{A}(\xi)=r\qquad\text{for all }\xi\in\R^n\setminus\{0\}.
\end{align*}

As to the resolution of Theorem \ref{thm:main_pot}, we recall the notion of (\emph{Moore--Penrose}) \emph{generalized inverse}, introduced independently in \cite{Moore,B,Penrose}, to which we refer plainly as the \emph{pseudo--inverse}, although the terminology is not standard. For a matrix $M\in\R^{N\times m}$, its pseudo--inverse $M^\dagger$ is the unique ${m\times N}$ matrix defined by the relations
\begin{align*}
MM^\dagger M=M,\quad M^\dagger MM^\dagger=M^\dagger,\quad (MM^\dagger)^*=MM^\dagger,\quad(M^\dagger M)^*=M^\dagger M,
\end{align*}
where $M^*$ denotes the adjoint (transpose) of $M$. Equivalently, the pseudo--inverse is determined by the geometric property that $MM^\dagger$ and $M^\dagger M$ are orthogonal projections onto $\mathrm{im\,}M$ and $(\ker M)^\perp$ respectively. We refer the reader to the monograph \cite{PI} for more detail on generalized inverses.

With these considerations in mind, it is easy to see that the projection map $\mathbb{P}\in\hold^\infty(\R^n\setminus\{0\},\lin(W,W))$ defined in \eqref{eq:proj_murat} can be represented as
\begin{align}\label{eq:proj}
\mathbb{P}(\xi)=\id_W-\mathcal{A}^\dagger(\xi)\mathcal{A}(\xi)\quad\text{for }\xi\in\R^n\setminus\{0\}.
\end{align}
The smoothness of $\mathbb{P}$ is well--known \cite[Prop.~2.7]{FM99}; for a proof using pseudo--inverses see \cite[Sec.~4]{Pro}. By the basic properties of pseudo--inverses, it is easy to see that, with the choice $\B=\mathbb{P}$, we have that \eqref{eq:exact_pot} holds; however, the tensor--valued map $\mathbb{P}$ is $0$--homogeneous, hence not polynomial in general. In particular, $\mathbb{P}$ cannot define a differential operator.

On the other hand, motivated by a similar construction in \cite[Rk.~4.1]{VS}, one can speculate that $\mathbb{P}$ and, in fact, $\mathcal{A}^\dagger(\cdot)$ are rational functions. This is indeed the case, as a consequence of the main result of \textsc{Decell} in \cite{Decell}, building on the fundamental result of \textsc{Penrose} \cite[Thm.~2]{Penrose} and the Cayley--Hamilton Theorem.
\begin{theorem}[{\textsc{Decell} \cite[Thm.~3]{Decell}}]\label{thm:dec}
Let $M\in\R^{N\times m}$ and denote by
\begin{align*}
p(\lambda)\coloneqq (-1)^N\left( a_0\lambda^N+a_1\lambda^{N-1}+\ldots +a_N\right)\quad\text{for }\lambda\in\R
\end{align*}
the characteristic polynomial of $MM^*$, where $a_0=1$. Define 
\begin{align}\label{eq:rank}
r\coloneqq\max\{j\in\N\colon a_j>0\}.
\end{align}
Then, if $r=0$, we have that $M^\dagger=0$; else
\begin{align*}
M^\dagger=-a^{-1}_rM^*\left[a_0(MM^*)^{r-1}+a_1(MM^*)^{r-2}+\ldots +a_{r-1}\id_{N\times N}\right].
\end{align*}
\end{theorem}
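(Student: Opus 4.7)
The plan is to verify Decell's formula by directly checking the four Moore--Penrose relations for $M^*Q$, where
\[
Q \coloneqq -a_r^{-1}\bigl[a_0(MM^*)^{r-1}+a_1(MM^*)^{r-2}+\cdots+a_{r-1}\id_{N\times N}\bigr],
\]
using the Cayley--Hamilton identity for $MM^*$ as the single algebraic input. The case $r=0$ is trivial: $a_1=\cdots=a_N=0$ forces $p(\lambda)=(-1)^N\lambda^N$, so $MM^*$ has only the eigenvalue $0$ and, being symmetric, vanishes identically; hence $M=0$ and $M^\dagger=0$ as claimed.

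Henceforth assume $r\geq 1$. The first step is to identify $r$ from \eqref{eq:rank} with $\operatorname{rank}(MM^*)=\operatorname{rank}(M)$: since $MM^*$ is symmetric positive semi--definite of rank $s$, its characteristic polynomial is divisible by $\lambda^{N-s}$ but not by $\lambda^{N-s+1}$, and Decell's sign convention for the $a_j$ is tailored so that the last surviving coefficient $a_s$ is strictly positive, giving $r=s$. Cayley--Hamilton applied to $MM^*$ then reads, after dividing by $(-1)^N$ and using $a_{r+1}=\cdots=a_N=0$,
\[
\sum_{j=0}^r a_j(MM^*)^{N-j}=0.
\]
Factoring out $(MM^*)^{N-r}$ and restricting to $\operatorname{Ran}(MM^*)$, on which $MM^*$ is invertible, gives $(MM^*)Q=\id$ on $\operatorname{Ran}(MM^*)$; on the complementary subspace $\ker(MM^*)$ one trivially has $(MM^*)Q=0$. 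Thus $MM^*Q$ is the orthogonal projection onto $\operatorname{Ran}(MM^*)=\operatorname{Ran}(M)$, where the last equality uses $\ker(MM^*)=\ker M^*$.

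It remains to verify the four Penrose relations for $M^\dagger\coloneqq M^*Q$. The two symmetry axioms
\[
(MM^*Q)^*=MM^*Q\qquad\text{and}\qquad(M^*QM)^*=M^*QM
\]
are immediate, as $Q$ is a polynomial in the symmetric matrix $MM^*$ and is therefore self--adjoint and commutes with $MM^*$. The identity $MM^*QM=M$ follows from the projection property, since every column of $M$ lies in $\operatorname{Ran}(M)$. Finally, $M^*$ vanishes on $\ker M^*=\ker MM^*$, which is invariant under $Q$ (as $Q$ is a polynomial in $MM^*$); consequently $M^*Q$ vanishes on $\ker M^*$ and therefore equals $M^*Q\cdot\operatorname{Proj}_{\operatorname{Ran}(M)}=M^*Q\cdot MM^*Q$, which is the remaining axiom. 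Uniqueness of the Moore--Penrose pseudo--inverse then yields $M^\dagger=M^*Q$. The genuine subtlety, and the step I expect to require the most care, is the rank identification $r=\operatorname{rank}(M)$, which hinges on the precise sign convention used to define the coefficients $a_j$ in the statement.
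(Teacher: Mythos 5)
The paper itself gives no proof of this statement --- it is quoted verbatim from Decell's 1965 paper (building on Penrose's characterization and Cayley--Hamilton) --- so there is no internal argument to compare against; what you supply is a self-contained verification in exactly the spirit of Decell's original. The core of your argument is correct: with $H=MM^*$ and $Q\coloneqq-a_r^{-1}\bigl[a_0H^{r-1}+\cdots+a_{r-1}\id\bigr]$, Cayley--Hamilton together with the factorization $H^{N-r}\bigl(\sum_{j\le r}a_jH^{r-j}\bigr)=0$, the invariance of $\mathrm{Ran}\,H$ and $\ker H$ under polynomials in $H$, and $\ker H=\ker M^*$ do show that $HQ$ is the orthogonal projection onto $\mathrm{Ran}\,M$, and your verification of the four Penrose relations for $M^*Q$, followed by uniqueness of the pseudo--inverse, is sound.

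The one step that does not hold as written is the rank identification, which you yourself flag as the delicate point. If $\lambda_1,\dots,\lambda_N\ge 0$ are the eigenvalues of $MM^*$, the convention $p(\lambda)=\det(MM^*-\lambda\id)=(-1)^N(a_0\lambda^N+\cdots+a_N)$ with $a_0=1$ gives $a_j=(-1)^j e_j(\lambda_1,\dots,\lambda_N)$, so the surviving coefficients alternate in sign; your assertion that ``the sign convention is tailored so that the last surviving coefficient $a_s$ is strictly positive'' is false in general. Indeed, for $M=\id$ on $\R^1$ one has $a_1=-1$, so the literal definition $r=\max\{j\colon a_j>0\}$ would give $r=0$ and the (wrong) conclusion $M^\dagger=0$. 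The correct reading --- the one in Decell's paper, and the one the sufficiency proof of Theorem \ref{thm:main_pot} implicitly uses when it identifies $r(\xi)$ with the number of nonzero eigenvalues --- is $r=\max\{j\colon a_j\neq 0\}$; with that reading your divisibility argument does yield $r=\rank MM^*=\rank M$, since $e_j\neq 0$ precisely for $j\le\rank MM^*$, and the rest of your proof goes through verbatim. The same correction is needed in your $r=0$ case (there, the vanishing of $a_1,\dots,a_N$ forces $p(\lambda)=(-1)^N\lambda^N$, hence $MM^*=0$ and $M=0$). So: right strategy and essentially complete, but the rank step must rest on the $a_j\neq0$ reading (equivalently on $r=\rank M$), not on positivity of the coefficients.
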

\begin{proof}[Proof of Theorem \ref{thm:main_pot} (sufficiency)]
Suppose that $\mathcal{A}$ has constant rank. We put $M\coloneqq\mathcal{A}(\xi)$ in the above Theorem for $\xi\in\R^n\setminus\{0\}$,  and abbreviate $\mathcal{H}(\xi)\coloneqq\mathcal{A}(\xi)\mathcal{A}^*(\xi)$. The first, perhaps most crucial, observation is that $r(\xi)$, as defined by \eqref{eq:rank}, equals the number of non--zero eigen--values of $MM^*$, which equals the number of singular values of $M$. This is, in turn, equal to $\rank M$, which is independent of $\xi$ by the constant rank assumption on $\mathcal A$.

Therefore, if $r(\xi)=r=0$, we have that $\mathcal{A}(\xi)=0_{N\times m}$, $\mathcal{A}^\dagger(\xi)=0_{m\times N}$, so we can simply choose $\B(\xi)=\id_W$, which satisfies \eqref{eq:exact_pot} and gives rise to a linear, 0--homogeneous differential operator. Otherwise, if $r(\xi)=r>0$, we obtain
\begin{align*}
\mathcal{A}^\dagger(\xi)=-a_r(\xi)^{-1}\mathcal{A}^*(\xi)\left[a_0(\xi)\mathcal{H}(\xi)^{r-1}+a_1(\xi)\mathcal{H}(\xi)^{r-2}+\ldots +a_{r-1}(\xi)\id_{X}\right].
\end{align*}
It is easy to see that $\mathcal{H}(\cdot)$ is a tensor--valued polynomial in $\xi$. The scalar fields $a_j$, $j=1\ldots r$, are such that $a_j(\xi)$ is a coefficient of the characteristic polynomial of $\mathcal{H}(\xi)$, hence a linear combination of minors. In particular, $a_j$ are scalar--valued polynomials in $\xi$.

It then follows that, with $\mathbb{P}$ as in \eqref{eq:proj_murat},
\begin{align}\label{eq:bbA}
\B(\xi)\coloneqq a_r(\xi)\mathbb{P}(\xi)
=a_r(\xi)\id_W-a_r(\xi)\mathcal{A}^\dagger(\xi)\mathcal{A}(\xi)\quad\text{for }\xi\in\R^n
\end{align}
defines a tensor--valued polynomial that satisfies \eqref{eq:exact_pot}. In particular, \eqref{eq:bbA} gives rise to a linear differential operator. To check that it is homogeneous, it suffices to see that $a_r(\cdot)$ is a linear combination of minors of the same order of $\mathcal{H}(\cdot)$, which is homogeneous since $\mathcal{A}(\cdot)$ is.
\end{proof}
The necessity of the constant rank condition in Theorem \ref{thm:main_pot} follows from the following Lemma and the Rank--Nullity Theorem.
\begin{lemma}\label{lem:nec_CR_pot}
Let $S\subset\R^n$ be a set of positive Lebesgue measure and $P,Q$ be two matrix--valued polynomials on $\R^n$. Suppose that there exists $s$ such that
\begin{align*}
\rank P(\xi)+\rank Q(\xi)=s\qquad\text{ for }\xi\in S.
\end{align*}
Then both $P$ and $Q$ have constant rank in $S$.
\end{lemma}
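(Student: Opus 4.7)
The plan is to exploit the fact that for any matrix-valued polynomial, the locus where the rank drops below its generic maximum is a proper algebraic subvariety of $\R^n$, hence Lebesgue-null. Concretely, if $r_{P}\coloneqq\max_{\xi\in\R^{n}}\rank P(\xi)$, then $\{\xi\colon \rank P(\xi)\geq r_{P}\}$ is the non-empty Zariski-open set defined by the non-vanishing of the sum of squares of all $r_{P}\times r_{P}$ minors of $P$. The complement is the zero set of a non-trivial real polynomial, and therefore has Lebesgue measure zero. The same holds for $Q$ with maximum rank $r_{Q}$.

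Armed with this, the first step is to intersect these two full-measure sets with $S$. Since $S$ has positive Lebesgue measure, we can pick $\xi_{0}\in S$ at which both polynomials attain their generic ranks simultaneously, yielding
\begin{align*}
r_{P}+r_{Q}=\rank P(\xi_{0})+\rank Q(\xi_{0})=s.
\end{align*}

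The final step is to rule out rank drops on all of $S$. For an arbitrary $\xi\in S$ we have the obvious inequalities $\rank P(\xi)\leq r_{P}$ and $\rank Q(\xi)\leq r_{Q}$, whose sum is at most $r_{P}+r_{Q}=s$. Because the hypothesis forces the sum to equal $s$, both inequalities must be equalities, so $\rank P(\xi)=r_{P}$ and $\rank Q(\xi)=r_{Q}$ for every $\xi\in S$, as desired.

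The only subtle point is the measure-theoretic input that a non-trivial real polynomial has Lebesgue-null zero set; this is the main (mild) obstacle and is typically handled by induction on $n$ together with Fubini, or by observing that zero sets of non-trivial real-analytic functions are locally null. Everything else is a rank-counting argument that pins the ranks to their generic values.
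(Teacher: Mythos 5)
Your proof is correct. It rests on exactly the same key fact as the paper's argument, namely that rank conditions on a matrix--valued polynomial are encoded by the vanishing of minors, and that the zero set of a non--trivial real polynomial is Lebesgue--null; but you assemble the pieces differently. The paper argues by contradiction entirely inside $S$: assuming $\rank P$ takes at least two values on $S$, it covers $S$ by the sublevel set $\{\rank P<r_2\}\subset\{\mathrm{M}_{r_2}P=0\}$ and the level set $\{\rank P=r_2\}\cap S=\{\rank Q=s-r_2\}\cap S\subset\{\mathrm{M}_{s-r_1}Q=0\}$, both null, contradicting $\mathscr{L}^n(S)>0$. You instead work with the global generic ranks $r_P,r_Q$ over $\R^n$, pick a point of $S$ where both are attained (possible since $S$ has positive measure and the exceptional sets are null), deduce $r_P+r_Q=s$, and then pin both ranks at every point of $S$ by the elementary inequalities $\rank P(\xi)\leq r_P$, $\rank Q(\xi)\leq r_Q$. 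Your direct version is slightly cleaner (no case splitting, only the trivial upper bounds at the end) and yields a bit more: the constant value of $\rank P$ on $S$ is identified as the generic rank of $P$ on all of $\R^n$, which is in fact the natural statement behind the necessity part of Theorem~\ref{thm:main_pot}; the paper's contradiction argument stays internal to $S$ and does not need to introduce the global maxima. Your closing remark correctly isolates the only non--trivial input (nullity of polynomial zero sets, the fact the paper delegates to the cited reference), so there is no gap.
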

\begin{proof}
We abbreviate $R_P\coloneqq \rank P$, $R_Q\coloneqq\rank Q$ and assume for contradiction that $R_P$ is not constant in $S$. Say $R_P(S)=\{r_1,r_1+1\ldots,r_2\}$ for natural numbers $r_1<r_2$. We also write $\mathrm{M}_d$ for the map that has input a matrix and returns (a vector of) all its minors of order $d$. In particular, $\mathrm{M}_dP$, $\mathrm{M}_dQ$ are vector--valued polynomials on $\R^n$. We then have that
\begin{align*}
R_P^{-1}(\{r_1,r_1+1\ldots r_2-1\})\subset\{\xi\in\R^n\colon\mathrm{M}_{r_2}P(\xi)=0\},
\end{align*}
so that either $\mathrm{M}_{r_2}P\equiv0$ (which is not the case by definition of $r_2$) or $R_P^{-1}(\{r_1,r_1+1\ldots r_2-1\})$ is Lebesgue--null\footnote{For an elementary proof of this fact, see \cite{CT}.}. On the other hand,
\begin{align*}
R_P^{-1}(\{r_2\})\cap S&=R_Q^{-1}(\{s-r_2\})\cap S\\
&\subset R_Q^{-1}(\{s-r_2,s-r_2+1,\ldots s-r_1-1\})\\
&\subset \{\xi\in\R^n\colon\mathrm{M}_{s-r_1}Q(\xi)=0\},
\end{align*}
which is Lebesgue-null by the same argument. Since
\begin{align*}
S=[R_P^{-1}(\{r_1,r_1+1,\ldots r_2-1\})\cap S]\cup [R_P^{-1}(\{r_2\})\cap S],
\end{align*}
it follows that $S$ is Lebesgue--null and we arrive at a contradiction.
\end{proof}

It is natural to ask the reversed question, whether a constant rank operator $\B$ admits an exact annihilator $\mathcal{A}$. This is indeed the case, as can be shown by a simple modification of the argument above:
\begin{remark}\label{rk:ann}
Let $\B$ be a linear, homogeneous, differential operator of \emph{constant rank} on $\R^n$ from $V$ to $W$. Then, we can choose $M\coloneqq\B(\xi)$ for $\xi\in\R^n\setminus\{0\}$ in Theorem \ref{thm:dec}, so that
\begin{align*}
\mathcal{A}(\xi)\coloneqq a_r(\xi)\left[\id_W-\B(\xi)\B^\dagger(\xi)\right]\quad\text{for }\xi\in\R^n
\end{align*}
satisfies \eqref{eq:exact_pot} and gives rise to a differential operator. In particular, the formula is consistent with \cite[Eq.~(4.3)]{VS}. This fact can be used to extend the $\lebe^1$--estimates in \cite{VS,BVS} to constant rank operators.
\end{remark}

We conclude the discussion of algebraic properties with two remarks: Firstly, it is quite convenient that the two constructions presented are explicitly computable. On the other hand, performing the computations on simple examples, e.g., involving only $\di$, $\mathrm{grad}$, $\curl$, one easily notices that the operators constructed via our formulas are often overcomplicated. Perhaps more computationally efficient methods, e.g., in the spirit of \cite[Sec.~4.2]{VS} can be developed.
\section{$\mathcal{A}$--quasiconvexity}\label{sec:Aqc}
The relevance of Theorem \ref{thm:main_pot} for analysis can be seen, for instance, from the fact that periodic $\mathcal{A}$--free fields have differential structure:
\begin{lemma}\label{lem:equal_fields}
Let $\mathcal{A}$, $\B$ be linear, homogeneous, differential operators of constant rank with constant coefficients on $\R^n$ from $W$ to $X$, and from $V$ to $W$, respectively. Assume that \eqref{eq:exact_pot} holds. Then
for all $w\in\hold^\infty(\mathbb{T}_n,W)$ such that $\mathcal{A}w=0$ and $\int_{\mathbb{T}_n}w(x)\dif x=0$, there exists $u\in\hold^\infty(\mathbb{T}_n,V)$ such that $w=\B u$. Similarly, for all $w\in\mathscr{S}(\R^n,W)$ such that $\mathcal{A}w=0$, there exists $u\in\mathscr{S}(\R^n,V)$ such that $w=\B u$.
%\end{enumerate}
\end{lemma}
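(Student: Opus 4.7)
The plan is to pass to frequency space and solve the equation $\B u = w$ pointwise by inverting $\B(\xi)$ on the subspace where the data lives. The central tool is the Moore--Penrose pseudo--inverse $\B^\dagger(\xi)$, which under the constant rank hypothesis is smooth on $\R^n\setminus\{0\}$ and satisfies $\B(\xi)\B^\dagger(\xi) = \mathrm{Proj}_{\mathrm{im\,}\B(\xi)}$. Since by Theorem~\ref{thm:main_pot} the datum $\hat w(\xi)$ lies precisely in $\mathrm{im\,}\B(\xi) = \ker\mathcal{A}(\xi)$ whenever $\xi\neq 0$, applying $\B^\dagger$ to $\hat w$ gives a natural candidate $\hat u$ for the Fourier transform of the potential.

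For the periodic case I would expand $w(x) = \sum_{k\in\mathbb{Z}^n} \hat w_k e^{2\pi \imag k\cdot x}$; the zero--mean hypothesis is equivalent to $\hat w_0 = 0$, so the troublesome frequency $\xi=0$ never appears. For each $k\neq 0$ the constraint $\mathcal{A}w=0$ forces $\hat w_k\in\ker\mathcal{A}(2\pi k) = \mathrm{im\,}\B(2\pi k)$, so setting $\hat u_0 := 0$ and $\hat u_k := \B^\dagger(2\pi k)\hat w_k$ produces $\B(2\pi k)\hat u_k = \hat w_k$ by the defining Moore--Penrose identity. Smoothness of $u = \sum_k \hat u_k\, e^{2\pi\imag k\cdot x}$ then follows from the rapid decay of $\hat w_k$ (since $w\in\hold^\infty(\mathbb{T}_n)$) and the polynomial bound $\|\B^\dagger(2\pi k)\|\lesssim |k|^{-\mathrm{ord}\,\B}$, which is a consequence of the $(-\mathrm{ord}\,\B)$--homogeneity together with smoothness of $\B^\dagger$ on $S^{n-1}$.

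For the Schwartz case I would similarly take the Fourier transform and define $\hat u(\xi) := \B^\dagger(\xi)\hat w(\xi)$ for $\xi\neq 0$, so that $\B(\xi)\hat u(\xi) = \hat w(\xi)$ on $\R^n\setminus\{0\}$ by the same pseudo--inverse identity, and then invert the Fourier transform. To make this rigorous and to land in $\mathscr{S}(\R^n,V)$, I would appeal to Decell's representation applied to $\B$ (the observation recorded in Remark~\ref{rk:ann}): there exist a polynomial matrix $L(\xi)$ and a scalar polynomial $a_r(\xi)$, positive on $\R^n\setminus\{0\}$, such that $a_r(\xi)\B^\dagger(\xi) = L(\xi)$. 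Multiplying through gives $a_r(\xi)\hat u(\xi) = L(\xi)\hat w(\xi)$, in which the right--hand side is Schwartz since $L$ is polynomial and multiplication by polynomials preserves $\mathscr{S}$.

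The main obstacle is then the extension of $\hat u$ across $\xi = 0$: the factor $1/a_r(\xi)$ produces a singularity of order $\dim V\cdot\mathrm{ord}\,\B$--worth at the origin which must be absorbed by corresponding vanishing of $L(\xi)\hat w(\xi)$. I expect this cancellation to follow from the structural constraint $\hat w(\xi)\in\mathrm{im\,}\B(\xi)$: applying $L(\xi)=a_r(\xi)\B^\dagger(\xi)$ to an element of $\mathrm{im\,}\B(\xi)$ yields $a_r(\xi)$ times the orthogonal projection onto $(\ker\B(\xi))^\perp$, which, by constant rank, depends smoothly on $\xi\in\R^n\setminus\{0\}$ and is $0$--homogeneous; the product with $a_r$ therefore inherits the vanishing order of $a_r$ itself at the origin. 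Combined with Schwartz decay at infinity for $L\hat w$, this yields $\hat u\in\mathscr{S}(\R^n,V)$, and inverse Fourier transforming produces the desired $u$.
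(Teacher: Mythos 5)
Your treatment of the periodic case is correct and is exactly the paper's argument: expand $w$ in Fourier series, use $\mathcal{A}(\xi)\hat w(\xi)=0$ and \eqref{eq:exact_pot} to place $\hat w(\xi)\in\mathrm{im\,}\B(\xi)$ for $\xi\neq0$ (the mean--zero assumption removing $\xi=0$), set $\hat u(\xi)\coloneqq\B^\dagger(\xi)\hat w(\xi)$, recover $w$ from $\B(\xi)\B^\dagger(\xi)=\mathrm{Proj}_{\mathrm{im\,}\B(\xi)}$, and get smoothness of $u$ from the rapid decay of $\hat w$ together with the $(-l)$--homogeneity and smoothness (by constant rank) of $\B^\dagger$ away from the origin.

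The Schwartz half is where your proposal has a genuine gap, precisely at the step you flag as the main obstacle. The constraint $\hat w(\xi)\in\mathrm{im\,}\B(\xi)$, valid for $\xi\neq0$, forces no vanishing of $\hat w$ at the origin, and the claimed cancellation is unjustified: $L(\xi)\hat w(\xi)=a_r(\xi)\B^\dagger(\xi)\hat w(\xi)$ is not ``$a_r(\xi)$ times a $0$--homogeneous projection applied to $\hat w(\xi)$'' --- the identity $\B^\dagger(\xi)\B(\xi)=\mathrm{Proj}_{(\ker\B(\xi))^\perp}$ only becomes available after writing $\hat w(\xi)=\B(\xi)v(\xi)$, and the size of such $v(\xi)$ as $\xi\to0$ is exactly what is in question. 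In fact the cancellation fails in general: take $w=(w_1,w_2)\colon\R^n\rightarrow\R^2$, $\mathcal{A}w\coloneqq\nabla w_1$, $\B u\coloneqq(0,\Delta u)$ for scalar $u$; both are homogeneous of constant rank one and $\ker\mathcal{A}(\xi)=\{0\}\times\R=\mathrm{im\,}\B(\xi)$ for $\xi\neq0$, so the hypotheses of the Lemma hold. For $w\coloneqq(0,\e^{-\pi|x|^2})\in\mathscr{S}$ we have $\mathcal{A}w=0$, yet $\hat u(\xi)=\B^\dagger(\xi)\hat w(\xi)\sim c\,|\xi|^{-2}$ near the origin: here $a_r(\xi)$ vanishes to order four while $L(\xi)\hat w(\xi)$ vanishes only to order two, so $1/a_r$ is not absorbed. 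Moreover no Schwartz potential exists at all in this example, since $\widehat{\Delta u}(0)=0$ for every $u\in\mathscr{S}$ while $\hat w_2(0)=1$; hence the argument cannot be repaired under the stated hypotheses, and additional information at $\xi=0$ (vanishing of $\hat w$ to sufficiently high order at the origin, which does not follow from $\mathcal{A}w=0$) is genuinely needed for a Schwartz conclusion. For comparison, the paper's own proof of this half consists only of the definition $\hat u\coloneqq\B^\dagger\hat w$ with no discussion of the origin, so you have correctly isolated a real difficulty that the paper passes over --- but your proposed resolution does not close it.
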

Here $\mathbb{T}_n$ denotes the $n$--dimensional torus, identified in an obvious way with (a quotient of) $[0,1]^n$. The Fourier transform is defined as
\begin{align}\label{eq:FT}
\hat{u}(\xi)\coloneqq\int_{\mathbb{T}_n} u(x)\e^{-2\pi\imag x\cdot\xi}\dif x,
\end{align}
for $\xi\in\mathbb{Z}^n$ and $u\in\hold^\infty(\mathbb{T}_n)$. Also, $\mathscr{S}(\R^n)$ denotes the Schwartz class of rapidly decreasing functions on $\R^n$, where the Fourier transform is defined also by \eqref{eq:FT}, with the amendment that the integral is taken over $\R^n$.
\begin{proof}
Let $w\in\hold^\infty(\mathbb{T}_n,W)$ have zero average and satisfy $\mathcal{A}w=0$, so that
\begin{align*}
w(x)=\sum_{\xi\in\mathbb{Z}^n\setminus\{0\}}\hat{w}(\xi)\e^{2\pi\imag x\cdot \xi},
\end{align*}
for $x\in\mathbb{T}_n$, where the coefficients $\hat{w}(\xi)\in\ker\mathcal{A}(\xi)$ decay faster than any polynomial as $|\xi|\rightarrow\infty$. We define
\begin{align*}
u(x)\coloneqq\sum_{\xi\in\mathbb{Z}^n\setminus\{0\}}\B^\dagger(\xi)\hat{w}(\xi)\e^{2\pi\imag x\cdot \xi},
\end{align*}
for $x\in\mathbb{T}_n$, which is smooth by homogeneity of $\B^\dagger(\cdot)$: say $\B$ has order $l$, then $\B^\dagger(\cdot)$ is $(-l)$--homogeneous. We can thus differentiate the sum term by term to obtain
\begin{align*}
\B u(x)&=(2\pi\imag)^l\sum_{\xi\in\mathbb{Z}^n\setminus\{0\}}\B(\xi)\B^\dagger(\xi)\hat{w}(\xi)\e^{2\pi\imag x\cdot \xi}\\
&=(2\pi\imag)^l\sum_{\xi\in\mathbb{Z}^n\setminus\{0\}}\hat{w}(\xi)\e^{2\pi\imag x\cdot \xi}\\
&=(2\pi\imag)^lw(x),
\end{align*}
where the exactness relation \eqref{eq:exact_pot} is used in the second equality, along with the geometric properties of the pseudo--inverse. The proof of the first case is complete.

We give an analogous argument for the case when $w\in\mathscr{S}(\R^n,W)$ is $\mathcal{A}$--free. We have the pointwise relation $\mathcal{A}(\xi)\hat{w}(\xi)=0$, so that \eqref{eq:exact_pot} implies that $w\in\mathrm{im\,}\B(\xi)$ and we can define
\begin{align*}
\hat{u}(\xi)\coloneqq\B^\dagger(\xi)\hat{w}(\xi),
\end{align*}
which satisfies the required properties.
\end{proof}
We conclude this Section by showing that one can test with compactly supported smooth maps in the definition of $\mathcal{A}$--quasiconvexity.
\begin{corollary}\label{cor:A-Aqc}
Let $\mathcal{A},\,\B$ be as in Lemma \ref{lem:equal_fields} and $f\colon W\rightarrow\R$ be Borel measurable and locally bounded. Then
\begin{align*}
Q_\mathcal{A}f(\eta)&\coloneqq\inf\bigg\{\int_{\mathbb{T}_n}f(\eta+w(x))\dif x\colon w\in\hold^\infty(\mathbb{T}_n,W),\mathcal{A}w=0,\int_{\mathbb{T}_n}w(x)\dif x=0\bigg\},\\
Q^\B f(\eta)&\coloneqq\inf\bigg\{\int_{[0,1]^n}f(\eta+\B u(x))\dif x\colon u\in\hold^\infty_c((0,1)^n,V)\bigg\}
\end{align*}
are equal for all $\eta\in W$. Moreover, if $\B$ has order $l$ and $\alpha\in[0,1)$, we have
\begin{align}\label{eq:env_small_norm}
Q_\mathcal{A}f(\eta)=\inf\bigg\{\int_{[0,1]^n}f(\eta+\B u(x))\dif x\colon u\in\hold^\infty_c((0,1)^n,V),\|u\|_{\hold^{l-1,\alpha}}<\varepsilon\bigg\}
\end{align}
for any $\eta\in W$ and $\varepsilon>0$.
\end{corollary}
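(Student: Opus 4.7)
My plan is to establish the equality $Q_\mathcal{A}f(\eta)=Q^\B f(\eta)$ as two separate inclusions, and then to obtain \eqref{eq:env_small_norm} from a tiling argument that preserves the cost while shrinking the Hölder norm. The inequality $Q_\mathcal{A}f(\eta)\leq Q^\B f(\eta)$ is essentially tautological: if $u\in\hold^\infty_c((0,1)^n,V)$, its extension by zero to $\R^n$ is smooth and descends to a smooth field on $\mathbb{T}_n$; the symbol identity $\mathcal{A}(\xi)\B(\xi)=0$ from \eqref{eq:exact_pot} gives $\mathcal{A}\B u=0$, and integration by parts on the compactly supported $u$ gives $\int_{\mathbb{T}_n}\B u=0$. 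Hence $\B u$ is admissible in $Q_\mathcal{A}f(\eta)$ realising the same cost.

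For the reverse inequality I would start from an admissible $w\in\hold^\infty(\mathbb{T}_n,W)$ and apply Lemma~\ref{lem:equal_fields} to write $w=\B u$ for some $u\in\hold^\infty(\mathbb{T}_n,V)$. Then I scale and cut off: set $u_N(x):=N^{-l}u(Nx)$, so that $\B u_N(x)=w(Nx)$ by $l$-homogeneity of $\B$, and $v_{N,\delta}:=\chi_\delta u_N$, where $\chi_\delta\in\hold^\infty_c((0,1)^n)$ is a standard cutoff equal to $1$ on $[\delta,1-\delta]^n$. The Leibniz rule gives $\B v_{N,\delta}=\chi_\delta w(N\cdot)+r_{N,\delta}$ with $\|r_{N,\delta}\|_\infty\leq C_\delta N^{-1}\|u\|_{\hold^{l-1}}\to 0$ as $N\to\infty$. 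Riemann--Lebesgue applied to the bounded $\mathbb{T}_n$-periodic map $y\mapsto f(\eta+w(y))$ yields
\[
\int_{[\delta,1-\delta]^n}f(\eta+w(Nx))\,\dif x\longrightarrow(1-2\delta)^n\int_{\mathbb{T}_n}f(\eta+w(y))\,\dif y,
\]
while the boundary strip contributes $O(\delta)$ uniformly in $N$, since $\chi_\delta w(N\cdot)+r_{N,\delta}$ stays in a fixed bounded set and $f$ is locally bounded. Sending first $N\to\infty$ and then $\delta\to 0$ closes the estimate. The genuinely delicate point is absorbing the residual $r_{N,\delta}$ into $f(\eta+w(N\cdot)+r_{N,\delta})$: this is automatic for continuous $f$ by uniform continuity on bounded sets, but for merely Borel measurable $f$ it requires either squeezing $f$ between continuous upper and lower envelopes, or a Vitali-type blow-up in the spirit of Fonseca--M\"uller.

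For \eqref{eq:env_small_norm} I would take an almost-optimal $u\in\hold^\infty_c((0,1)^n,V)$ in $Q^\B f(\eta)$ and tile it: define
\[
u_N(x)\coloneqq N^{-l}\sum_{j\in\{0,\ldots,N-1\}^n}u\bigl(N(x-j/N)\bigr),
\]
a sum of $N^n$ disjointly supported rescaled translates inside the subcubes $j/N+(0,1/N)^n$ tiling $[0,1]^n$. Since $\operatorname{supp}u\Subset(0,1)^n$, each translate stays compactly inside its subcube, so $u_N\in\hold^\infty_c((0,1)^n,V)$. A change of variables on each subcube yields $\int_{[0,1]^n}f(\eta+\B u_N)\,\dif x=\int_{[0,1]^n}f(\eta+\B u)\,\dif x$ for every $N$, while the chain rule bounds $\|\partial^\beta u_N\|_\infty\leq N^{-1}\|u\|_{\hold^{l-1}}$ for $|\beta|\leq l-1$ and $[\partial^{l-1}u_N]_\alpha\leq N^{-1+\alpha}[\partial^{l-1}u]_\alpha$; both tend to zero as $N\to\infty$ provided $\alpha<1$. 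Choosing $N$ sufficiently large therefore produces a competitor with arbitrarily small $\hold^{l-1,\alpha}$ norm and identical cost.
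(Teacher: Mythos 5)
Your argument is correct and follows essentially the same route as the paper: write the periodic test field as $w=\B u$ via Lemma \ref{lem:equal_fields}, rescale $u_N=N^{-l}u(N\cdot)$, cut off, control the boundary layer by local boundedness of $f$, and prove \eqref{eq:env_small_norm} by the same rescaling/tiling of a compactly supported competitor. The only thing to repair is your closing caveat, which is spurious: there is no residual to absorb into $f$. Every term of $r_{N,\delta}$ carries at least one derivative of $\chi_\delta$, so $r_{N,\delta}\equiv0$ on $[\delta,1-\delta]^n$, where your competitor's field equals $w(Nx)$ exactly; on the strip you only use that $\chi_\delta w(N\cdot)+r_{N,\delta}$ stays in a fixed ball, together with local boundedness of $f$. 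Hence no continuity of $f$, no continuous envelopes (which in any case do not exist for general Borel integrands) and no blow-up argument are needed; this is precisely how the paper dispenses with continuity, by coupling $\delta N\geq1$ so that $|\B(\rho^\delta u_N)|$ is bounded by a constant depending only on $u$, and then choosing $\delta$ so that the strip has measure at most $M^{-1}\varepsilon$. Two minor streamlinings: since $N$ is an integer, periodicity gives $\int_{[0,1]^n}f(\eta+w(Nx))\dif x=\int_{\mathbb{T}_n}f(\eta+w(y))\dif y$ exactly, so the Riemann--Lebesgue lemma and the double limit $N\rightarrow\infty$, $\delta\rightarrow0$ are not needed; and in the tiling step your bound $[\partial^{l-1}u_N]_\alpha\leq N^{\alpha-1}[\partial^{l-1}u]_\alpha$ is literally valid only for $x,y$ in the same subcube --- for points in different subcubes insert intermediate points on $\partial Q_x\cap(x,y)$ and $\partial Q_y\cap(x,y)$, near which $\nabla^{l-1}u_N$ vanishes (as the paper does), or use $|g(x)-g(y)|\leq(2\|g\|_\infty)^{1-\alpha}(\|\nabla g\|_\infty|x-y|)^\alpha$ with $g=\partial^{l-1}u_N$, which again gives a factor $N^{\alpha-1}$ up to a constant.
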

The proof follows standard arguments; in particular we follow \cite[Prop.~5.13]{Da} and \cite[Thm.~4.2]{KirKri} and include the proof for completeness of the present work.
\begin{proof}
It is obvious that $Q_\mathcal{A}f\leq Q^\B f$. To prove the opposite inequality, let $\varepsilon>0$, $\eta\in W$, and $w$ be a periodic field as in the definition of $Q_\mathcal{A}f(\eta)$. We will construct $v\in\hold^\infty_c((0,1)^n,V)$ such that 
\begin{align}\label{eq:qc}
\int_{[0,1]^n}f(\eta+\B v(x))\dif x\leq \int_{[0,1]^n}f(\eta+w(x))+\varepsilon.
\end{align}
By Lemma \ref{lem:equal_fields}, we have that $w=\B u$ for a periodic field $u\in\hold^\infty(\mathbb{T}_n,V)$. Say, as before, that $\B$ has order $l$ and define $u_N(x)\coloneqq N^{-l}u(Nx)$ for $N$ sufficiently large. This does not change the value of the integral over the cube. Next, let $\delta>0$ be sufficiently small and truncate to obtain $u^\delta_N\coloneqq\rho^\delta u_N$, where $\rho^\delta\in\hold^\infty_c([0,1]^n)$ is such that $\rho^\delta(x)=1$ if $\mathrm{dist}(x,\partial[0,1]^n)>\delta$ and $|\nabla^j\rho^\delta|\leq C\delta^{-j}$ for $j=0\ldots l$ and some constant $C>0$. We impose $\delta N\geq1$ and leave $\delta$ to be determined. It follows, for $c_1\geq1$ depending on $\B$ only, that
\begin{align*}
|\B u^\delta_N|&\leq |\rho^\delta\B u_N|+c_1\sum_{j=1}^{l}|\nabla^{j}\rho^\delta||\nabla^{l-j} u_N|\\
&\leq c_1C\left( \|\B u\|_{\lebe^\infty}+\sum_{j=1}^l(\delta N)^{-j} \|\nabla^{l-j}u\|_{\lebe^\infty}\right)\\
&\leq c_1C\left(\|\B u\|_{\lebe^\infty}+\sum_{j=0}^{l-1}\|\nabla^{j}u\|_{\lebe^\infty}\right)\eqqcolon c_1C\|u\|_{\sobo^{\B,\infty}}.
\end{align*}
Say $f$ is bounded by $M>0$ on $\ball(0,|\eta|+c_1C\|u\|_{\sobo^{\B,\infty}})$. Hence, if we choose $\delta$ such that 
%\begin{align*}
$\mathscr	L^n\left(\{x\in[0,1]^n\colon\mathrm{dist}(x,\partial[0,1]^n)\leq\delta\}\right)\leq M^{-1}\varepsilon$, we obtain
%\end{align*}
\begin{align*}
\int_{[0,1]^n}f(\eta+\B u^\delta_N(x))\dif x&\leq \int_{\mathrm{dist}(x,\partial[0,1]^n)<\delta}M\dif x+\int_{[0,1]^n}f(\eta+\B u_N(x))\dif x
\\&\leq M\times M^{-1}\varepsilon+\int_{[0,1]^n}f(\eta+w(x))\dif x,
\end{align*}
which implies \eqref{eq:qc} with $v\coloneqq u_N^\delta$. To prove the equality of the two envelopes, we distinguish two cases: If $Q_\mathcal{A}f(\eta)>-\infty$, we can choose $w$ such that
\begin{align*}
\int_{[0,1]^n}f(\eta+w(x))\dif x \leq Q_\mathcal{A}f(\eta)+\varepsilon,
\end{align*}
and we conclude that $Q_\mathcal{A}f(\eta)=Q^\B f(\eta)$ by \eqref{eq:qc} since $\varepsilon>0$ is arbitrary. If $Q_\mathcal{A}f(\eta)=-\infty$, we choose $w$ such that
\begin{align*}
\int_{[0,1]^n}f(\eta+w(x))\dif x\leq -\varepsilon^{-1},
\end{align*}
so that we can conclude by \eqref{eq:qc} that $Q^\B f(\eta)=-\infty$.

To prove \eqref{eq:env_small_norm}, we need only show that the infimum is smaller than the envelope. Firstly, note as above that by replacing $u$ with $u_N(x)=N^{-l}u(Nx)$, where $u$ is extended by periodicity to $\R^n$, the value of the integral does not change. It suffices to choose $N$ large enough so that $u_N$ has small $\hold^{l-1,\alpha}$--norm. Note that for $j=0\ldots l-1$ we have
\begin{align*}
\|\nabla^ju_N\|_{\infty}=N^{j-l}\|\nabla^ju\|_{\infty},
\end{align*}
which can clearly be made arbitrarily small.

Finally, to check the H\"older bound, say that $\{z_i+[0,N^{-1}]^n\}_{i=1}^{N^n}$ is a covering of $[0,1]^n$ by cubes of side--length $N^{-1}$ that can only touch at their boundaries and let $x,y\in[0,1]^n$. If $x,y$ lie in the same cube $z_i+[0,N^{-1}]^n$, we have that
\begin{align*}
|\nabla^{l-1}u_N(x)-\nabla^{l-1}u_N(y)|&=N^{-1}|\nabla^{l-1}u(Nx-z_i)-\nabla^{l-1}u(Ny-z_i)|\\
&\leq \|\nabla^l u\|_{\infty}|x-y|\\
&\leq (\sqrt{n}N^{-1})^{1-\alpha}\|\nabla^l u\|_{\infty}|x-y|^\alpha,
\end{align*}
which can be made small since $1-\alpha>0$. If $x,y$ lie in different cubes, which we label $Q_x,Q_y$. Let $\bar x\in\partial Q_x\cap(x,y)$, $\bar y\in\partial Q_y\cap(x,y)$, so that $|x-y|\geq|x-\bar x|+|y-\bar y|$, $|x-\bar x|,|y-\bar y|\leq\sqrt{n}N^{-1}$, and all derivatives of $u_N$ vanish near $\bar x,\bar y$. Using these facts and the previous step we get
\begin{align*}
|\nabla^{l-1}u_N(x)-\nabla^{l-1}u_N(y)|&\leq|\nabla^{l-1}u_N(x)-\nabla^{l-1}u_N(\bar x)|\\
&+|\nabla^{l-1}u_N(y)-\nabla^{l-1}u_N(\bar y)|\\
&\leq (\sqrt{n}N^{-1})^{1-\alpha}\|\nabla^l u\|_{\infty}\left(|x-\bar x|^\alpha+|y-\bar y|^\alpha\right)\\
&\leq (\sqrt{n}N^{-1})^{1-\alpha}\|\nabla^l u\|_{\infty}2^{-\alpha}|x-y|^\alpha,
\end{align*}
where the last inequality follows by concavity and monotonicity of $0\leq t\mapsto t^\alpha$. The proof is complete.
\end{proof}
\begin{remark}
\normalfont{Using the argument in Corollary~\ref{cor:A-Aqc}, one can show for constant rank operators $\cala$ that $\cala$--quasiconvexity, as defined by \textsc{Fonseca} and \textsc{M\"uller} in \cite[Def.~3.1]{FM99}, coincides with $\cala$--$\B$--quasiconvexity, as introduced by \textsc{Dacorogna} in \cite{Da82,Da82_A-B_qc} (to be precise, in the original definition of $\cala$--$\B$--quasiconvexity, the operator $\B$ is assumed to be of first order, but this is only a minor technical restriction). In this case, it is not difficult to prove that \cite[Thm.~4]{Da82_A-B_qc} is essentially unconditional. A proof of this fact will be given elsewhere.}
\end{remark}
We also have that $\cala$--quasiconvexity can be defined by integrals over arbitrary domains, instead of cubes.
\begin{lemma}\label{lem:Aqc_dom}
Let $\mathcal{A},\,\B$ be as in Lemma \ref{lem:equal_fields} and $f\colon W\rightarrow\R$ be Borel measurable, locally bounded, and $\cala$--quasiconvex, and $\Omega$ be a bounded open set. Then
\begin{align*}
f(\eta)\leq \fint_\Omega f(\eta+\B v(y))\dif y
\end{align*}
for all $\eta\in W$ and $v\in\hold^\infty_c(\Omega,V)$.
\end{lemma}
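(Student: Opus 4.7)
My plan is to reduce the statement on an arbitrary bounded open $\Omega$ to the unit cube via a simple rescaling argument, using Corollary~\ref{cor:A-Aqc} to recast the $\cala$--quasiconvexity inequality in the form that allows compactly supported test fields of the shape $\B u$. Since $\cala$--quasiconvex $f$ satisfies $f=Q_\cala f$, the corollary gives
\begin{align*}
f(\eta)\leq\int_{(0,1)^n}f(\eta+\B u(x))\dif x\quad\text{for every }u\in\hold^\infty_c((0,1)^n,V).
\end{align*}
The task is to turn the given $v\in\hold^\infty_c(\Omega,V)$ into such a $u$ on the unit cube, while keeping track of what happens to $\B v$ under the change of variables.

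Concretely, I would choose an affine map $T\colon (0,1)^n\to\R^n$, $Tx=Rx+y_0$, with $R>0$ large enough that $\overline{\Omega}\subset T((0,1)^n)$. Letting $l$ denote the order of $\B$, I set $\tilde v(x)\coloneqq R^{-l}v(Tx)$; since $\spt v$ is compactly contained in $\Omega$, the rescaled $\spt\tilde v$ is compactly contained in $(0,1)^n$, so $\tilde v\in\hold^\infty_c((0,1)^n,V)$. The $l$--homogeneity of $\B$ then gives the pointwise identity $\B\tilde v(x)=(\B v)(Tx)$, so applying the cube form of $\cala$--quasiconvexity to $\tilde v$ and changing variables $y=Tx$ yields
\begin{align*}
f(\eta)\leq\int_{(0,1)^n}f(\eta+\B\tilde v(x))\dif x=R^{-n}\int_{T((0,1)^n)}f(\eta+\B v(y))\dif y.
\end{align*}

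To finish, I would exploit that $\B v$ vanishes on $T((0,1)^n)\setminus\Omega$ (since $\B$ is a differential operator and $\spt v\subset\Omega$), so the integrand equals $f(\eta)$ there. Splitting the integral over $T((0,1)^n)$ into the pieces over $\Omega$ and its complement in $T((0,1)^n)$ gives
\begin{align*}
R^n f(\eta)\leq\int_\Omega f(\eta+\B v(y))\dif y+(R^n-|\Omega|)f(\eta),
\end{align*}
which rearranges to the claimed inequality $f(\eta)\leq\fint_\Omega f(\eta+\B v(y))\dif y$. The measurability and local boundedness of $f$ guarantee that both integrals are well--defined because $\eta+\B v$ stays in a compact subset of $W$.

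The only real point requiring care is the rescaling: the factor $R^{-l}$ is dictated by the homogeneity of $\B$, and one must check that $\tilde v$ lands in $\hold^\infty_c((0,1)^n,V)$ and that $\B v$ indeed extends by zero off $\Omega$. Beyond this, no covering or approximation argument is needed; the equivalence provided by Corollary~\ref{cor:A-Aqc} does all the heavy lifting, so I expect no substantial obstacle.
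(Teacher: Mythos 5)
Your proof is correct and is essentially the paper's argument: the paper rescales $v$ by a small factor $\varepsilon$ to place $x_0+\varepsilon\Omega$ inside the unit cube and then splits the cube integral into $x_0+\varepsilon\Omega$ and its complement, which is exactly your change of variables with $R=1/\varepsilon$ and $y_0=-x_0/\varepsilon$. The homogeneity factor $R^{-l}$, the use of Corollary~\ref{cor:A-Aqc} to test with $\B u$ on the cube, and the observation that the integrand equals $f(\eta)$ off $\spt v$ all coincide with the paper's proof.
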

The proof follows from a simple argument in the Calculus of Variations \cite[Prop.~5.11]{Da}.
\begin{proof}
Fix $\eta\in W$, $v\in\hold^\infty_c(\Omega,V)$, extended by zero to $\R^n$. By the argument in the proof of Corollary~\ref{cor:A-Aqc}, we write $C\coloneqq(0,1)^n$  and have that
\begin{align*}
f(\eta)\leq \int_C f(\eta +\B u(x) )\dif x
\end{align*}
for all $u\in\hold^\infty_c(C,V)$. For sufficiently small $\varepsilon>0$, we can find $x_0\in\R^n$ such that $x_0+\varepsilon\Omega\subset C$. We define
\begin{align*}
u(x)\coloneqq\varepsilon^l v\left(\dfrac{x-x_0}{\varepsilon}\right),
\end{align*}
so that
\begin{align*}
f(\eta)&\leq \int_C f(\eta+\B u(x))\dif x=|C\setminus(x_0+\varepsilon\Omega)|f(\eta)+\int_{x_0+\varepsilon\Omega}f(\eta+\B u(x))\dif x\\
&=(1-\varepsilon^n|\Omega|)f(\eta)+\int_\Omega f(\eta+\B v(y))\varepsilon^n\dif y.
\end{align*}
Rearranging the terms we obtain the conclusion.
\end{proof}
\section{$\mathcal{A}$--free Young measures}\label{sec:AYM}
We recall the definition of oscillation Young measures, while also giving a simplified variant of the Fundamental Theorem of Young measures.
\begin{theorem}[FTYM, {\cite{Mu,Pe}}]\label{thm:FTYM}
Let $\Omega\subset\R^n$ be a bounded, open set and $z_j\in\lebe^1(\Omega,\R^d)$ be a bounded sequence in $\lebe^1$. Then there exists a subsequence (not relabeled) and a weakly--* measurable map $\bm{\nu}\colon\Omega\rightarrow \mathcal{P}({\R^d})$ (or \emph{parametrized measure} $\bm{\nu}=(\nu_x)_{x\in\Omega}$) such that for all $f\in\hold(\Omega\times\R^d)$ we have that
\begin{align*}
\liminf_{j\rightarrow\infty}\int_\Omega f(x,z_j(x))\dif x\geq \int_{\Omega}\langle f(x,\cdot\,),\nu_x\rangle\dif x
\end{align*}
Moreover,
\begin{align*}
\lim_{j\rightarrow\infty}\int_\Omega f(x,z_j(x))\dif x= \int_{\Omega}\langle f(x,\cdot\,),\nu_x\rangle\dif x
\end{align*}
if and only if the sequence $f(\,\cdot,z_j)$ is uniformly integrable.
\end{theorem}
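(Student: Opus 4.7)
The plan would be to follow the classical compactification approach to the Fundamental Theorem of Young measures. First I would associate to each $z_j$ the graph measure
\[
\mu_j \coloneqq (\id\times z_j)_{\#}(\mathcal{L}^n\res\Omega),
\]
a positive measure on $\Omega\times\R^d$ of total mass $|\Omega|$ whose first marginal is $\mathcal{L}^n\res\Omega$. To remedy the non-compactness of the target, I would embed $\R^d$ into a metric compactification $\overline{\R^d}$ (e.g.\ via $y\mapsto y/\sqrt{1+|y|^2}$ into the closed unit ball) and view the $\mu_j$ as a norm-bounded sequence in the dual of $\hold(\overline{\Omega}\times\overline{\R^d})$.

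Banach--Alaoglu then produces a weakly-$*$ convergent subsequence $\mu_j\wstar\mu$ in this dual. The first marginal of $\mu$ is still $\mathcal{L}^n\res\Omega$, so the Disintegration Theorem yields a weakly-$*$ measurable parametrized family $(\nu_x)_{x\in\Omega}$ of (sub-)probability measures on $\overline{\R^d}$ with $\mu=\nu_x\otimes\mathcal{L}^n(\dif x)$; these are genuine probability measures on $\R^d$ whenever no mass escapes to infinity, which is the case under any tightness hypothesis implicit in the statement. The weak-$*$ measurability of $x\mapsto\nu_x$ is the standard output of disintegration.

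For the liminf inequality applied to $f\in\hold(\Omega\times\R^d)$ (understood as bounded from below, so that both sides make sense), I would approximate $f$ from below by compactly supported truncations $f_M\in\hold_c(\Omega\times\R^d)$ with $f_M\nearrow f$ pointwise. Weak-$*$ convergence on the compactification gives
\[
\lim_{j\to\infty}\int_\Omega f_M(x,z_j(x))\dif x=\int_\Omega\langle f_M(x,\cdot),\nu_x\rangle\dif x
\]
for each $M$, and monotone convergence on the right combined with Fatou on the left upgrades this to the claimed $\geq$ inequality for $f$. The equivalence with uniform integrability follows by a direct adaptation of Vitali's convergence theorem: uniform integrability of $(f(\cdot,z_j))_j$ prevents any loss of mass when sending $M\to\infty$, promoting Fatou to dominated convergence and turning the limiting inequality into an equality; the converse is obtained by testing against suitable cut-offs.

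The main obstacle is precisely the non-compactness of $\R^d$: mere $\lebe^1$-boundedness does not prevent $z_j$ from concentrating mass at infinity, so some care is required to guarantee $\nu_x\in\mathcal{P}(\R^d)$ almost everywhere, and to pair continuous unbounded $f$ meaningfully with $\nu_x$. Uniform integrability, or equivalently a tightness assumption, is exactly the structural ingredient that handles this difficulty in the equality case.
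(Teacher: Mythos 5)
This statement is quoted in the paper as a classical result, cited from the monographs of M\"uller and Pedregal, and no proof is given there; your sketch is precisely the standard compactification proof from those references (graph measures, Banach--Alaoglu on $\hold(\overline{\Omega}\times\overline{\R^d})^*$, disintegration, truncation for the liminf inequality, Vitali-type argument for the equivalence with uniform integrability), so in that sense you are on the intended route. One point should not be left as a hedge, though: you write that the $\nu_x$ are genuine probability measures ``under any tightness hypothesis implicit in the statement,'' but no extra hypothesis is needed --- the stated $\lebe^1$-bound already gives the required tightness via Chebyshev, since $\mathscr{L}^n(\{|z_j|\geq M\})\leq M^{-1}\sup_j\|z_j\|_{\lebe^1}\to0$ uniformly in $j$, which forces the disintegrated limit to put no mass on the sphere at infinity, i.e.\ $\nu_x\in\mathcal{P}(\R^d)$ for a.e.\ $x$; as written, your conclusion is conditional on an assumption the theorem does not make. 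Two smaller remarks: the liminf inequality indeed requires $f$ bounded below (your reading is the right repair of the loosely stated hypothesis, and since $\Omega$ is bounded one may subtract a constant and assume $f\geq0$); and the converse direction of the ``moreover'' part deserves more than ``testing against cut-offs'' --- after reducing to $f\geq0$, one compares $\int f(x,z_j)\dif x$ with $\int (f\wedge M)(x,z_j)\dif x$, uses the assumed equality together with the liminf inequality for the truncations and monotone convergence in $M$ to show the tails $\int_{\{f(x,z_j)\geq M\}}f(x,z_j)\dif x$ are small uniformly in $j$, which is exactly uniform integrability.
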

Above, $\mathcal{P}(\R^d)$ denotes the space of probability measures on $\R^d$. In the notation of Theorem~\ref{thm:FTYM}, we say that \emph{$z_j$ generates the Young measure $\bm{\nu}$} (in symbols, $z_j\ymarrow\bm{\nu}$). We also recall that a sequence $z_j$ is said to be uniformly integrable if and only if for all $\varepsilon>0$, there exists $\delta>0$ such that for all borel sets $E\subset\Omega$, we have that 
\begin{align*}
\mathscr{L}^n(E)<\delta\implies\sup_j\int_E|z_j|\dif x<\varepsilon,
\end{align*}
or, equivalently, if
\begin{align*}
\lim_{\alpha\rightarrow\infty}\sup_j\int_{\{|z_j|>\alpha\}}|z_j|\dif x=0.
\end{align*}
If $|z_j|^p$ is uniformly integrable, we say that $z_j$ is \emph{$p$--uniformly integrable}.
\begin{lemma}[{\cite[Prop.~2.4]{FM99}}]\label{lem:shifts}
Let $z_j$ generate a Young measure $\bm{\nu}$ and $\tilde{z}_j\rightarrow\tilde{z}$ in measure. Then $z_j+\tilde{z}_j$ generates the Young measure $\bm{\mu}$ given by $\mu_x=\nu_x\star\delta_{\tilde{z}(x)}$ for $\mathscr{L}^n$ a.e. $x$, i.e.,
\begin{align*}
\langle\varphi,\mu_x\rangle=\langle \varphi(\,\cdot+\tilde{z}(x),\nu_x\rangle
\end{align*}
for any $\varphi\in\hold_0$.
\end{lemma}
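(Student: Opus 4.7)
My plan is to verify the Young measure identification $\mu_x=\nu_x\star\delta_{\tilde z(x)}$ by testing: I will show that for every $\varphi\in\hold_0(\R^d)$ and $\psi\in\Cc(\Omega)$,
\begin{align*}
\int_\Omega \psi(x)\,\varphi(z_j(x)+\tilde z_j(x))\dif x\longrightarrow\int_\Omega\psi(x)\bigl\langle\varphi(\,\cdot+\tilde z(x)),\nu_x\bigr\rangle\dif x.
\end{align*}
Since $\|\varphi(z_j+\tilde z_j)\|_\infty\leq\|\varphi\|_\infty$ is uniform, this sequence is uniformly integrable, so Theorem~\ref{thm:FTYM} combined with the standard subsequence principle will identify the Young measure of the full sequence $z_j+\tilde z_j$ as $\bm\mu$. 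I would decompose
\begin{align*}
\varphi(z_j+\tilde z_j)=\varphi(z_j+\tilde z)+\bigl[\varphi(z_j+\tilde z_j)-\varphi(z_j+\tilde z)\bigr]
\end{align*}
and handle the two summands separately.

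The difference term vanishes in $\lebe^1(\Omega)$ by uniform continuity: since $\varphi\in\hold_0(\R^d)$ admits a modulus of continuity $\omega$, one has the pointwise bound $|\varphi(z_j+\tilde z_j)-\varphi(z_j+\tilde z)|\leq\omega(|\tilde z_j-\tilde z|)$, which tends to zero in measure by hypothesis and is dominated by $2\|\varphi\|_\infty$; dominated convergence on the bounded set $\Omega$ yields the $\lebe^1$ conclusion.

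The remaining piece requires
\begin{align*}
\int_\Omega\psi(x)\varphi(z_j(x)+\tilde z(x))\dif x\longrightarrow\int_\Omega\psi(x)\bigl\langle\varphi(\,\cdot+\tilde z(x)),\nu_x\bigr\rangle\dif x,
\end{align*}
and this is the main obstacle: the integrand $F(x,v)\coloneqq\psi(x)\varphi(v+\tilde z(x))$ is only a Carath\'eodory function, whereas Theorem~\ref{thm:FTYM} as stated applies to $f\in\hold(\Omega\times\R^d)$. I would bridge the gap via Lusin's theorem: given $\eta>0$, choose a continuous $\tilde z^\eta\colon\Omega\to\R^d$ coinciding with $\tilde z$ off a set of Lebesgue measure less than $\eta$. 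Then $F^\eta(x,v)\coloneqq\psi(x)\varphi(v+\tilde z^\eta(x))$ is jointly continuous and bounded on $\Omega\times\R^d$, so Theorem~\ref{thm:FTYM} applies directly to give the analogous convergence with $\tilde z^\eta$ in place of $\tilde z$. Both the prelimit integrals and their limits change by at most $2\|\psi\|_\infty\|\varphi\|_\infty\eta$ when $\tilde z$ is replaced by $\tilde z^\eta$, because the integrands agree on $\{\tilde z=\tilde z^\eta\}$; sending $\eta\downarrow0$ concludes this step. Combining the two pieces verifies the desired convergence for arbitrary test pairs $(\psi,\varphi)$, and a density argument passes from $\Cc(\Omega)\otimes\hold_0(\R^d)$ to the full class of test functions characterising the Young measure.

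The only delicate point is the Lusin approximation of the merely measurable shift $\tilde z$; the argument crucially uses both the uniform continuity of $\varphi$ (to kill the swap $\tilde z_j\leadsto\tilde z$) and its $\lebe^\infty$ bound (to control the swap $\tilde z\leadsto\tilde z^\eta$ on small sets), both automatic from $\varphi\in\hold_0(\R^d)$. The resulting limit $\int_\Omega\psi(x)\langle\varphi(\,\cdot+\tilde z(x)),\nu_x\rangle\dif x$ equals $\int_\Omega\psi(x)\langle\varphi,\nu_x\star\delta_{\tilde z(x)}\rangle\dif x$ by definition of convolution with a Dirac mass, which is the sought-after identification of $\bm\mu$.
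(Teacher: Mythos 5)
The paper itself contains no proof of this lemma---it is imported verbatim from \cite[Prop.~2.4]{FM99}---so there is no internal argument to measure you against; judged on its own, your proof is correct and follows the standard route. The decomposition into the replacement of $\tilde z_j$ by $\tilde z$ (controlled by a modulus of continuity of $\varphi\in\hold_0$, convergence in measure, the bound $2\|\varphi\|_\infty$ and finiteness of $\mathscr{L}^n(\Omega)$; note that the step you call dominated convergence is really its convergence-in-measure/Vitali variant, which is exactly what you need here) plus the fixed-shift term with Carath\'eodory integrand $\psi(x)\varphi(v+\tilde z(x))$ is the right skeleton. The genuinely nontrivial point is the one you identify: Theorem~\ref{thm:FTYM} is stated only for $f\in\hold(\Omega\times\R^d)$, so the merely measurable shift must be handled somehow. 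Your Lusin approximation does this correctly, provided you add a Tietze extension so that $\tilde z^\eta$ is continuous on all of $\Omega$ (Lusin alone gives continuity only on a closed subset), and the $O(\eta)$ errors on both the prelimit integrals and the limit use, as you say, the uniform bound and the fact that each $\nu_x$ is a probability measure; the alternative, closer to what \cite{FM99} rely on, is to quote the fundamental theorem directly in its bounded-Carath\'eodory form, which your Lusin argument in effect reproves for this special integrand. Two routine points you only gesture at should be made explicit in a final write-up: the subsequence principle (every subsequence of $z_j+\tilde z_j$ has a further subsequence generating some Young measure, which your computation forces to equal $\bm{\mu}$, so the full sequence generates $\bm{\mu}$), and the passage from the countable tensor family $\psi\otimes\varphi$, $\psi\in\Cc(\Omega)$, $\varphi\in\hold_0(\R^d)$, to the a.e.\ identification $\mu_x=\nu_x\star\delta_{\tilde z(x)}$, which uses density of $\Cc(\Omega)$ in $\lebe^1(\Omega)$ against the uniform $\lebe^\infty$ bound together with separability of $\hold_0(\R^d)$. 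With these spelled out the proof is complete.
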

The following is an extension of \cite[Lem.~2.15]{FM99}. The first two steps of the present proof are almost a repetition of their arguments, which we include since the original proof only covers first order annihilators $\mathcal{A}$.
\begin{proposition}\label{prop:AYM_p}
Let $\mathcal{A}$, $\B$ be as in Lemma \ref{lem:equal_fields} and have orders $k$, $l$, respectively, $\Omega\subset\R^n$ be a bounded Lipschitz domain, and $1<p<\infty$. Let $w_j,w\in\lebe^p(\Omega,W)$ be such that
\begin{align*}
w_j\rightharpoonup w&\text{ in }\lebe^p(\Omega,W),\\
\mathcal{A}w_j\rightarrow\mathcal{A}w&\text{ in }\sobo^{-k,p}_{\locc}(\Omega,X),\\
w_j\overset{\mathbf{Y}}{\rightarrow}\bm{\nu}.
\end{align*}
Then there exists a sequence $u_j\in\hold^\infty_c({\Omega},V)$ such that
\begin{align*}
\B u_j\rightharpoonup0&\text{ in }\lebe^p(\Omega,W),\\
\B u_j+w\overset{\mathbf{Y}}{\rightarrow}\bm{\nu}.
\end{align*}
Moreover, $u_j$ can be chosen such that $(\B u_j)_j$ is $p$--uniformly integrable.
\end{proposition}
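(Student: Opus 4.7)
The strategy is to adapt the arguments of \cite[Lem.~2.15]{FM99} from first-order annihilators to arbitrary order, using the Fourier-space potential constructed in Theorem~\ref{thm:main_pot}. By Lemma~\ref{lem:shifts} it suffices to treat the case $w \equiv 0$: indeed $w_j - w \rightharpoonup 0$, $\mathcal{A}(w_j - w) \to 0$ in $\sobo^{-k,p}_{\locc}$, and the shifted sequence generates $\tilde{\bm\nu}$ given by $\tilde\nu_x = \nu_x \star \delta_{-w(x)}$; if we produce $u_j \in \ccinfty(\Omega, V)$ with $\B u_j \ymarrow \tilde{\bm\nu}$ and $\B u_j \rightharpoonup 0$ in $\lebe^p$, applying Lemma~\ref{lem:shifts} again with shift $w$ yields $\B u_j + w \ymarrow \bm\nu$. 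So from now on assume $w = 0$.

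Next I enforce pointwise $\mathcal{A}$-freeness via the Helmholtz-type projection $\mathbb{P}(\xi) = \id_W - \mathcal{A}^\dagger(\xi)\mathcal{A}(\xi)$ from \eqref{eq:proj_murat}--\eqref{eq:proj}. Since $\mathbb{P}(\cdot)$ is $\hold^\infty$ on $\R^n\setminus\{0\}$ and $0$-homogeneous (Section~\ref{sec:algebra}), the Mihlin--Hörmander theorem gives a bounded extension on $\lebe^p(\R^n, W)$. Pick cutoffs $\varphi_m \in \ccinfty(\Omega)$ with $\varphi_m\to 1_\Omega$ and set $\tilde w_{j,m} := \mathbb{P}(\varphi_m w_j) = \varphi_m w_j - \mathcal{A}^\dagger\mathcal{A}(\varphi_m w_j)$. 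Expanding $\mathcal{A}(\varphi_m w_j)=\varphi_m\mathcal{A}w_j + [\mathcal{A},\varphi_m]w_j$, the first term vanishes in $\sobo^{-k,p}(\R^n)$ by hypothesis. The commutator is a differential operator of order at most $k-1$ applied to $w_j$; since $\varphi_m w_j \to 0$ strongly in $\sobo^{-1,p}(\R^n)$ by the compact embedding $\lebe^p\hookrightarrow\sobo^{-1,p}$ on bounded domains, each term $\partial^\gamma w_j$ with $|\gamma|\leq k-1$ tends to zero in $\sobo^{-k,p}$. Because $\mathcal{A}^\dagger$ is a Fourier multiplier of degree $-k$ and so maps $\sobo^{-k,p} \to \lebe^p$ boundedly, I obtain $\|\tilde w_{j,m} - \varphi_m w_j\|_{\lebe^p(\R^n)}\to 0$ as $j\to\infty$, while $\mathcal{A}\tilde w_{j,m} \equiv 0$ on $\R^n$.

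With this globally $\mathcal{A}$-free sequence in hand, I set $\hat{\tilde u}_{j,m}(\xi) := \B^\dagger(\xi)\hat{\tilde w}_{j,m}(\xi)$; the exactness relation \eqref{eq:exact_pot} and the geometric characterization of $\B\B^\dagger$ used in the proof of Lemma~\ref{lem:equal_fields} yield $\B\tilde u_{j,m} = \tilde w_{j,m}$. To reach a compactly supported smooth field I mollify $\tilde u_{j,m}$ and truncate by $\chi_\delta \in \ccinfty(\Omega)$ with $\chi_\delta \equiv 1$ on $\{\dista(\,\cdot\,,\partial\Omega)>\delta\}$ and $|\nabla^j\chi_\delta|\leq C\delta^{-j}$, then apply the Leibniz/concentration argument from the proof of Corollary~\ref{cor:A-Aqc}: the defect $\B(\chi_\delta\tilde u_{j,m}) - \chi_\delta\tilde w_{j,m}$ is dominated by $\sum_{i=1}^{l}\delta^{-i}|\nabla^{l-i}\tilde u_{j,m}|$ supported in a boundary strip of measure $O(\delta)$, and is driven to zero in $\lebe^p$ by choosing $\delta=\delta(j,m)$ small enough. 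A standard diagonal extraction in $m$ and $\delta$ produces $u_j \in \ccinfty(\Omega, V)$ with $\B u_j - w_j \to 0$ in measure on $\Omega$, so Lemma~\ref{lem:shifts} (with $\tilde z_j := \B u_j - w_j \to 0$) gives $\B u_j\ymarrow\bm\nu$, while $\B u_j \rightharpoonup 0$ in $\lebe^p$ follows from $w_j \rightharpoonup 0$ and the strong $\lebe^p$ approximations above.

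The remaining obstacle, $p$-uniform integrability of $(\B u_j)_j$, is the delicate point: concentrations in $|w_j|^p$ would naively propagate to $|\B u_j|^p$. My plan is to interpose a Lipschitz-type truncation of the potential $\tilde u_{j,m}$ before the cutoff step: $\tilde u_{j,m}$ lies in a homogeneous Sobolev space of order $l$, and I replace it by a modification that agrees with $\tilde u_{j,m}$ off the set where the Hardy--Littlewood maximal function of $\nabla^l\tilde u_{j,m}$ exceeds a threshold $\lambda_j \to \infty$ chosen slowly enough that this bad set has vanishing measure. Lemma~\ref{lem:shifts} then ensures the Young measure is preserved, while the truncated potential has $\B$-image bounded in $\lebe^\infty$ off the bad set, hence $p$-uniformly integrable. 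This is an Acerbi--Fusco style Lipschitz truncation carried over to the $\B$-potential setting through the constant-rank structure, in direct analogy with the argument behind \cite[Lem.~2.15]{FM99} in the first-order case.
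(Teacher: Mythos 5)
Your reduction to $w=0$, the projection via $\mathbb{P}=\id_W-\mathcal{A}^\dagger(\cdot)\mathcal{A}(\cdot)$ (using that $\mathcal{A}^\dagger$ has degree $-k$ so the projection error is controlled by $\|\mathcal{A}w_j\|_{\sobo^{-k,p}}$), the Fourier inversion $\hat u_j=\B^\dagger\hat{\tilde w}_j$, and the Leibniz cut--off near $\partial\Omega$ all match the paper's architecture (modulo minor technical points: the multipliers should be applied to mollified, Schwartz--class fields, as the paper does, rather than to raw $\lebe^p$ functions). The genuine gap is exactly where you flag it: the $p$--uniform integrability. Your plan -- Lipschitz--truncate the potential where $M(\nabla^l\tilde u_j)>\lambda_j$ with $\lambda_j\to\infty$ ``chosen slowly enough that the bad set has vanishing measure,'' and conclude $p$--uniform integrability because the truncation is ``bounded off the bad set'' -- does not work. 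After truncation the field is only bounded by $C\lambda_j$, with $\lambda_j\to\infty$, and on the good set it coincides with the original $\B\tilde u_j\approx w_j$; concentrations at heights below $\lambda_j$ are untouched. Concretely, take $w_j=\mu_j\chi_{A_j}$ with $|A_j|=\mu_j^{-p}$ and $\mu_j\to\infty$: for $\lambda_j=2\mu_j$ the bad set is empty, the truncation does nothing, the Young measure and the vanishing of the modification set are unaffected, yet the sequence is not $p$--uniformly integrable. So vanishing measure of the bad set (even $\lambda_j^p\,|\{M(\nabla^l\tilde u_j)>\lambda_j\}|\to0$) plus the pointwise bound $C\lambda_j$ is simply not enough; the truncation levels must be coupled to the generated Young measure.

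The paper resolves this at the \emph{start} rather than the end, and this is where the two routes genuinely diverge. It truncates the fields themselves, $\tau_{\alpha_j}w_j$, with $\alpha_j$ chosen so that $\int_\Omega|\tau_{\alpha_j}w_j|^p\dif x$ converges to the $p$--th moment $\int_\Omega\langle|\cdot|^p,\nu_x\rangle\dif x$; the ``if and only if'' clause of Theorem~\ref{thm:FTYM} then yields $p$--uniform integrability of the truncated sequence. The price is that $\tau_{\alpha_j}w_j-w_j$ only tends to zero in $\lebe^q$ for $q<p$, so the $\mathcal{A}$--defect is controlled in $\sobo^{-k,q}$ rather than $\sobo^{-k,p}$ -- whereas your ordering (project first, truncate later) keeps exponent $p$ but forfeits the mechanism that produces uniform integrability. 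One must then also check, as the paper does following \cite[Lem.~2.14(iv)]{FM99}, that $\mathbb{P}$, mollification, and the boundary cut--off all preserve $p$--uniform integrability (splitting $\mathbb{P}\tilde w_j=\mathbb{P}\tau_\alpha\tilde w_j+\mathbb{P}(\tilde w_j-\tau_\alpha\tilde w_j)$, the first factor bounded in $\lebe^r$, $r>p$, the second uniformly small in $\lebe^p$). If you want to keep your ordering, you would need to replace ``$\lambda_j$ slowly to infinity'' by a selection of truncation levels via the Young measure (or the biting lemma) in the spirit of the Fonseca--M\"uller--Pedregal decomposition lemma, and then re--run the smoothing and cut--off steps checking they preserve equi--integrability; as written, the final and decisive step of your argument is unjustified.
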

A Young measure $\bm{\nu}$ satisfying the assumptions of Proposition~\ref{prop:AYM_p} is said to be an \emph{$\mathcal{A}$--free Young measure}.
\begin{proof}
By Lemma~\ref{lem:shifts} and linearity we can assume that $w=0$. We will identify maps defined on $\Omega$ with their extensions by zero to full--space without mention. Uniform integrability considerations strictly refer to sequences defined on $\Omega$.

\emph{Step I}. We construct $p$--uniformy integrable $\tilde w_j\in\hold^\infty_c(\Omega,W)$ such that $\tilde{w}_j\rightharpoonup0$ in $\lebe^p(\Omega,W)$, $\mathcal{A}\tilde w_j\rightarrow 0$ in $\sobo^{-k,q}(\R^n,X)$ for some $1<q<p$, and $\tilde w_j$ generates $\bm{\nu}$.

Recall the truncation operators, defined for $\alpha>0$ by
\begin{align*}
\tau_\alpha A\coloneqq
\begin{cases}
A&\text{ if }|A|\leq\alpha\\
\alpha A/|A|&\text{ if }|A|>\alpha,
\end{cases}
\end{align*}
which are clearly Carath\'eodory integrands. By Theorem~\ref{thm:FTYM}, we have that
\begin{align*}
\lim_{\alpha\rightarrow\infty}\lim_{j\rightarrow\infty}\int_\Omega |\tau_\alpha w_j|^p\dif x&=\lim_{\alpha\rightarrow\infty}\int_{\Omega}\int_{W}|\tau_\alpha A|^p\dif\nu_x(A)\dif x\\
&=\int_{\Omega}\int_{W}|A|^p\dif\nu_x(A)\dif x<\infty,
\end{align*}
so that we can choose a diagonal subsequence $\alpha_j\uparrow\infty$ such that $\int_\Omega |\tau_{\alpha_j}w_j|^p\dif x$ equals the $p$--th moment of $\bm{\nu}$. It also follows from Theorem \ref{thm:FTYM} that $(\tau_{\alpha_j}w_j)_j$ is $p$--uniformly integrable.

We now show that $\tau_{\alpha_j}w_j$ generates $\bm{\nu}$. Since $w_j$ converges weakly in $\lebe^p(\Omega,W)$, it converges weakly in $\lebe^1$, hence is uniformly integrable, so that $\tau_{\alpha_j}w_j-w_j\rightarrow 0$ in measure. It also follows by elementary manipulations that $\tau_{\alpha_j}w_j-w_j\rightharpoonup 0$ in $\lebe^p$, so that, indeed, $\tau_{\alpha_j}w_j$ generates $\bm{\nu}$ by Lemma~\ref{lem:shifts}.

Let $1<q<p$. We have that
\begin{align*}
\|\tau_{\alpha_j}w_j-w_j\|_{\lebe^q(\Omega,W)}\leq\int_{\{|w_j|>\alpha_j\}}2^q|w_j|^q\dif x\leq 2^q\alpha_j^{q-p}\int_{\{|w_j|>\alpha_j\}}|w_j|^p\dif x\rightarrow0,
\end{align*}
so that $\mathcal{A}\tau_{\alpha_j}w_j\rightarrow0$ in $\sobo^{-k,q}_{\locc}(\Omega,X)$. We also record that $\tau_{\alpha_j}w_j$ is precompact in $\sobo^{-1,q}(\Omega,W)$, so that $D^\beta\tau_{\alpha_j} w_j\rightarrow0$ in $\sobo^{-k,q}(\Omega,X)$ for $|\beta|<k$.

We can therefore choose a sequence of cut--off functions $\rho_j\in\hold^\infty_c(\Omega,[0,1])$ such that $\rho_j\uparrow1$ in $\Omega$ and
$\|\rho_j \mathcal{A}\tau_{\alpha_j}w_j\|_{\sobo^{-k,q}(\R^n,X)}\rightarrow0$ and
\begin{align*}
\mathcal{A}(\rho_j\tau_{\alpha_j}w_j)=\rho_j\mathcal{A}\tau_{\alpha_j}w_j+\sum_{m=1}^k B_m[D^m\rho_j,D^{k-m}\tau_{\alpha_j}w_j]\rightarrow0\quad\text{ in }\sobo^{-k,q}(\R^n,X),
\end{align*}
where $B_m$ are fixed bi--linear pairings given by the Leibniz rule. To see that this is possible, consider $\Omega_j\coloneqq\{x\in\Omega\colon\mathrm{dist}(x,\partial\Omega)<j\}$, where $s_j\downarrow0$ will be determined. We require that $\rho_j=1$ in $\Omega\setminus\Omega_{s_j}$, $\rho_j=0$ in $\Omega_{2s_{j}}$ and $|D^{m}\rho_j|\leq cs_j^{-m}$, $m=1,\ldots,k$. It is easy to see that the sum above is controlled in $\sobo^{-k,q}$ by
\begin{align*}
\sum_{m=1}^k\|D^m\rho_j\|_{\lebe^\infty}\|D^{k-m}\tau_{\alpha_j}w_j\|_{\sobo^{-k,q}}\leq c\sum_{m=1}^ks_j^{-m}\|D^{k-m}\tau_{\alpha_j}w_j\|_{\sobo^{-k,q}},
\end{align*}
so that it suffices to choose any $s_j\geq \max_{m=1,\ldots,k}\|D^{k-m}\tau_{\alpha_j}w_j\|_{\sobo^{-k,q}}^{1/(2m)}\downarrow0$ as $j\rightarrow\infty$. Alternatively, one can consider a different cut--off sequence $\rho_i\uparrow1$ and employ a diagonalization argument. 

We define 
\begin{align*}
\tilde w_j\coloneqq(\rho_j\tau_{\alpha_j}w_j)\star\eta_{\varepsilon(j)},
\end{align*}
where $\eta_{\varepsilon(j)}$ denotes a standard sequence of (radial, positive) mollifiers and $\varepsilon(j)\downarrow0$ is such that $\tilde{w}_j\in\hold^\infty_c(\Omega,W)$ and, therefore, $\mathcal{A}\tilde{w}_j\rightarrow0$ in $\sobo^{-k,q}(\R^n,X)$. The latter inequality follows since, for all $\varphi\in\hold^\infty_c(\R^n,W)$ with $\|\varphi\|_{\sobo^{k,q}}\leq 1$,
\begin{align*}
\langle\mathcal{A}\tilde w_j,\varphi\rangle&=\langle\mathcal{A}(\rho_j\tau_{\alpha_j}w_j),\varphi\star\eta_{\varepsilon(j)}\rangle\leq\|\mathcal{A}(\rho_j\tau_{\alpha_j}w_j)\|_{\sobo^{-k,q}}\|\varphi\star\eta_{\varepsilon(j)}\|_{\sobo^{k,q}}\\
&\leq\| \mathcal{A}(\rho_j\tau_{\alpha_j}w_j)\|_{\sobo^{-k,q}}\rightarrow0.
\end{align*}
It is also clear that $\|\tilde w_j-\tau_{\alpha_j}w_j\|_{\lebe^p}\rightarrow0$, so that $\tilde w_j$ is $p$--uniformly integrable, converges weakly to $0$ in $\lebe^p$, and generates $\bm{\nu}$.

\emph{Step II}. We project $\tilde{w}_j$ on the kernel of $\mathcal{A}$ in $\R^n$ and show that $\mathbb{P}\tilde{w}_j$ are $p$--uniformly integrable in $\Omega$, converge weakly to zero in $\lebe^p$, and generate $\bm{\nu}$. Here the $\lebe^2$--orthogonal projection operator $\mathbb{P}$ is given by the multiplier in \eqref{eq:proj},
\begin{align*}
\widehat{\mathbb{P}w}(\xi)\coloneqq\mathbb{P}(\xi)\hat{w}(\xi)=[\id_W-\mathcal{A}^\dagger(\xi)\mathcal{A}(\xi)]\hat{w}(\xi)\quad\text{ for }w\in\mathscr{S}(\R^n,W).
\end{align*}
Since the symbol $\mathbb{P}(\cdot)$ is homogeneous of degree zero, $\mathbb{P}$ is a singular integral operator of convolution type; in particular $\mathbb{P}$ maps Schwartz functions to Schwartz functions. Moreover, we have that
\begin{align*}
\mathscr{F}\left(\tilde{w}_j-\mathbb{P}\tilde{w}_j\right)(\xi)=\B^\dagger(\xi)\B(\xi)\mathscr{F}\tilde{w}_j(\xi)=\mathcal{A}^\dagger\left(\dfrac{\xi}{|\xi|}\right)\dfrac{\widehat{\mathcal{A}\tilde{w}_j}(\xi)}{|\xi|^k}, 
\end{align*}
so that, by boundedness of singular integrals on $\lebe^q$
\begin{align*}
\|\tilde{w}_j-\mathbb{P}\tilde{w}_j\|_{\lebe^q(\R^n,W)}\leq c\left\|\mathscr{F}^{-1}\left(\frac{\widehat{\mathcal{A}\tilde w_j}}{|\cdot|^k}\right)\right\|_{\lebe^q(\R^n,X)}=c\|\mathcal{A}\tilde{w}_j\|_{\sobo^{-k,q}(\R^n,X)}\rightarrow0.
\end{align*}
It immediately follows by Lemma~\ref{lem:shifts} that $\mathbb{P}\tilde{w}_j$ generates $\bm{\nu}$. 
To see that $\mathbb{P}\tilde{w}_j\rightharpoonup0$ in $\lebe^p(\Omega,W)$, we note that, since $\mathbb{P}$ is (pointwisely) self--adjoint, we have, for any $g\in\lebe^{p/(p-1)}(\Omega,W)$,
\begin{align*}
\int_\Omega\langle g,\mathbb{P}\tilde{w}_j\rangle\dif x =\int_\Omega\langle \mathbb{P}g,\tilde{w}_j\rangle\dif x\rightarrow0,
\end{align*}
since $\mathbb{P}g\in\lebe^{p/(p-1)}(\Omega,W)$ by boundedness of singular integrals.

To see that $\mathbb{P}\tilde{w}_j$ is $p$--uniformly integrable, we use the idea in \cite[Lem.~2.14.(iv)]{FM99}. We first note, by boundedness of $\mathbb{P}$ on $\lebe^p$, that
\begin{align*}
\sup_j\|\mathbb{P}\tilde{w}_j-\mathbb{P}\tau_\alpha \tilde{w}_j\|_{\lebe^p(\R^n,W)}\leq c\sup_j\|\tilde{w}_j-\tau_\alpha \tilde{w}_j\|_{\lebe^p(\R^n,W)}\rightarrow0\quad\text{ as }\alpha\rightarrow\infty
\end{align*}
by $p$--uniform integrability of $\tilde{w}_j$. Note that for each fixed $\alpha$, $\mathbb{P}\tau_\alpha \tilde{w}_j$ is bounded in $\lebe^r$ for any $p<r<\infty$, hence is $p$--uniformly integrable. Let $\varepsilon>0$. We choose $\alpha>0$ such that
\begin{align*}
\sup_j\|\mathbb{P}\tilde{w}_j-\mathbb{P}\tau_{\alpha} \tilde{w}_j\|_{\lebe^p(\R^n,W)}<\varepsilon
\end{align*}
and also choose $\delta>0$ such that for each Borel set $E\subset \Omega$ with $\mathscr{L}^n(\Omega)<\delta$, we have that $\int_E|\mathbb{P}\tau_\alpha \tilde{w}_j|^p\dif x<\varepsilon$ for all $j$. It follows that for all such $E$,
\begin{align*}
\int_E |\mathbb{P}\tilde{w}_j|^p\dif x\leq 2^{p-1}\left(\sup_j\int_E |\mathbb{P}\tilde{w}_j-\mathbb{P}\tau_\alpha\tilde{w}_j|^p\dif x+\sup_j\int_E |\mathbb{P}\tau_\alpha\tilde{w}_j|^p\dif x\right)<(2\varepsilon)^p,
\end{align*}
where the right hand side is independent of $j$. The second step is concluded.

\emph{Step III}. Using Lemma \ref{lem:equal_fields}, we can write $\mathbb{P}\tilde{w}_j=\B u_j$, where
$
\hat{u}_j(\xi)\coloneqq \B^\dagger(\xi)\widehat{\mathbb{P}\tilde{w}_j}(\xi)
$, so that $u_j\in\mathscr{S}(\R^n,V)$.  It remains to cut--off $u_j$ suitably. %This is the essentially new part of this Proposition.

Since $\B$ has order $l$, we first note that
\begin{align*}
\widehat{D^lu}(\xi)=\B^\dagger(\xi)\widehat{\B u}(\xi)\otimes\xi^{\otimes l},
\end{align*}
so that $\B u\mapsto D^{l}u$ is a singular integral operator of convolution type. It follows that $D^l u_j$ is bounded in $\lebe^p(\R^n)$ (recall here that $\B u_j=\mathbb{P}\tilde w_j$ is bounded in $\lebe^p$ as $\tilde w_j\in\hold^\infty_c(\Omega,W)$ is a weakly convergent sequence), so $u_j$ is bounded in $\sobo^{l,p}(\Omega,V)$.

By compactness of the embedding $\sobo^{l,p}(\Omega)\hookrightarrow\sobo^{l-1,p}(\Omega)$, we have that $u_j\rightarrow u$ in $\sobo^{l-1,p}(\Omega,V)$. Since $\B u_j\rightharpoonup0$, we have that $\B u=0$. On the other hand, $u=\mathscr{F}^{-1}[\B^\dagger(\cdot)]\star (\B u)=0$, so that $D^{l-m}u_j\rightarrow0$ in $\lebe^p(\Omega)$ for $m=1,\ldots,l$.

We now proceed similarly to Step I. Let $\rho\in\hold^\infty_c(\R^n)$ be such that $\rho_j=1$ in $\Omega\setminus\Omega_{s_j}$ and $|D^{m}\rho_j|\leq cs_j^{-m}$, $m=1,\ldots,l$, where
\begin{align*}
s_j\coloneqq \max_{m=1,\ldots,l}\|D^{l-m}u_j\|_{\lebe^p(\Omega)}^{1/(2m)}\rightarrow0.
\end{align*}
We can then estimate
\begin{align*}
\|\B u_j-\B (\rho_j u_j)\|_{\lebe^p(\Omega)}&\leq\|(1-\rho_j)\B u_j\|_{\lebe^p(\Omega)}+\sum_{m=1}^l\|B_m[D^m\rho_j,D^{l-m}u_j]\|_{\lebe^p(\Omega)}\\
&\leq\|\B u_j\|_{\lebe^p(\Omega_{s_j})}+c\sum_{m=1}^ls^{-m}_j\|D^{l-m}u_j\|_{\lebe^p(\Omega)},
\end{align*}
which tends to zero by $p$--uniform integrability of $\B u_j$ and the choice of $s_j$. Here $B_m$ is another collection of bi--linear pairings given by the product rule. It then follows that $\B(\rho_ju_j)$ converges weakly to zero in $\lebe^p(\Omega,W)$, is $p$--uniformly integrable, and generates $\bm{\nu}$. The proof is complete.
\end{proof}
%In the limiting case $p=1$, the coercivity and compactness properties of functionals \eqref{eq:functional} of linear growth defined on $\mathcal{A}$--free $\lebe^1$--maps $w$ are naturally relaxed under weak--* convergence to the space of $\cala$-free measures. In this case, oscillation Young measures are not the appropriate tool to describe 

\end{document}